\newtheorem{theorem}{Theorem}
\newtheorem{corollary}[theorem]{Corollary}
\newtheorem{definition}[theorem]{Definition}
\newtheorem{lemma}[theorem]{Lemma}
\newtheorem{proposition}[theorem]{Proposition}
\newtheorem{remark}[theorem]{Remark}
\newtheorem*{thm2}{Theorem 2}
\newcommand{\comment}[1]{}
\renewcommand{\H}{\ensuremath{\mathbb{H}}}
\title{Dynamics on the $\textup{PSL}(2,\mathbb{C})$-character variety of a twisted $I$-bundle \thanks{Partially supported by NSF RTG grant DMS 0602191}}
\author{Michelle Lee}
\begin{document}

\maketitle

\begin{abstract}
   \centering
   \begin{minipage}{0.65\textwidth}
Let $M$ be a twisted interval bundle over a nonorientable hyperbolizable surface.  Let $\mathcal{X}(M)$ be the $\textup{PSL}(2, \mathbb{C})$-character variety of $\pi_1(M)$.  We examine the dynamics of the action of $Out(\pi_1(M))$ on $\mathcal{X}(M),$ and in particular, we find an open set on which the action is properly discontinuous that is strictly larger than the interior of the deformation space of marked hyperbolic manifolds homotopy equivalent to $M$.  Further, we identify which discrete and faithful representations can lie in a domain of discontinuity for the action of $Out(\pi_1(M))$ on $\mathcal{X}(M)$.
          \end{minipage}
\end{abstract}

\section{Introduction}
In this paper we use the deformation theory of hyperbolic $3$-manifolds to study the dynamics of $Out(\pi_1(M))$ on the $\textup{PSL}(2, \mathbb{C})$-character variety of $\pi_1(M)$ when $M$ is a hyperbolizable twisted $I$-bundle.  $AH(M)$ is the space of conjugacy classes of discrete and faithful representations of $\pi_1(M)$ into $\textup{PSL}(2, \mathbb{C})$.  It can also be thought of as the space of marked hyperbolic $3$-manifolds homotopy equivalent to a given compact $3$-manifold with boundary $M$.  It sits inside the $\textup{PSL}(2, \mathbb{C})$-character variety of $\pi_1(M)$ $$\mathcal{X}(M) = \textup{Hom}(\pi_1(M), \textup{PSL}(2, \mathbb{C})) /\!\!/ \textup{PSL}(2, \mathbb{C}).$$  $Out(\pi_1(M))$ acts on $\mathcal{X}(M)$ and on $AH(M)$ in the following way: an outer automorphism $[f]$ maps a representation $[\rho]$ to $[\rho \circ f^{-1}]$.  Using the parametrization of the interior of $AH(M)$ (see \cite{cm} for more details on this parametrization), it is well known that this action is properly discontinuous on the interior of $AH(M)$.  In this paper we find a larger domain of discontinuity for the action of $Out(\pi_1(M))$ in the case where $M$ is a twisted $I$-bundle over a non-orientable hyperbolic surface.  Namely, we prove the following.

\begin{theorem} \label{twistibund}
If $M$ is a hyperbolizable twisted $I$-bundle over a nonorientable hyperbolic surface, then there exists an open, $Out(\pi_1(M))$-invariant subset $\mathcal{PS}(M)$, called the set of primitive-stable representations, in $\mathcal{X}(M)$ containing the interior of $AH(M)$ as well as points on $\partial AH(M)$ such that the action of $Out(\pi_1(M))$ is properly discontinuous on $\mathcal{PS}(M)$.
\end{theorem}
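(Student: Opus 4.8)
The plan is to set up a notion of primitive-stability for $\pi_1(M)$ modeled on Minsky's theory for free groups, and to feed in the extra geometry coming from the nonorientable base surface $S$ together with its orientation double cover $\partial M \to S$. Fix a finite generating set for $\pi_1(M)$ and a basepoint $o \in \mathbb{H}^3$, and call $[\rho] \in \mathcal{X}(M)$ \emph{primitive-stable} if there are constants $K,\delta$ such that, for every primitive element $w$ (one carried by a primitive simple closed curve on $S$; in the free case these are exactly Minsky's primitive elements), the $\rho$-image of the line in the Cayley graph through the powers of $w$ is a $(K,\delta)$-quasigeodesic in $\mathbb{H}^3$; let $\mathcal{PS}(M)$ be the set of such $[\rho]$. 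Two of the required properties are then essentially formal. Invariance under $Out(\pi_1(M))$ holds because every outer automorphism permutes the set of primitive elements. Openness follows from the stability of quasigeodesics under bounded perturbation, upgraded to a statement uniform over all primitives by a compactness argument; this is exactly the argument Minsky uses for free groups and it transfers once primitivity is fixed.

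For proper discontinuity I would argue as Minsky does. The Morse Lemma turns primitive-stability into the statement that $\rho$ maps the primitive part of the Cayley graph to a uniformly quasiconvex, uniformly quasigeodesic family, and on a compact set $K \subset \mathcal{PS}(M)$ the constants may be taken uniform. If $\phi_n \in Out(\pi_1(M))$ is a sequence with $\phi_n K \cap K \neq \emptyset$, then each $\phi_n$ carries the quasigeodesic family of one representation in $K$ to that of another; comparing the two coarse geometries forces the $\phi_n$ to be realized by quasi-isometries of $\mathbb{H}^3$ with uniformly bounded constants, and only finitely many outer automorphisms can do this. Hence the action is properly discontinuous on $\mathcal{PS}(M)$.

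That the interior of $AH(M)$ lies in $\mathcal{PS}(M)$ is immediate: interior points are convex cocompact, so by the Milnor--\v{S}varc lemma the orbit map is a quasi-isometric embedding and \emph{every} element --- in particular every primitive --- has uniformly quasigeodesic orbit; see \cite{cm} for the parametrization of this interior. The substance of the theorem is to produce primitive-stable points on $\partial AH(M)$, and here the twisted $I$-bundle structure is essential. I would pinch a curve on $\partial M$ that is \emph{non-primitive} in $\pi_1(M)$ precisely because of the covering $\partial M \to S$: the class of such a curve is the boundary/relator curve of $S$, or a proper power $c^2$ arising as the double of a one-sided curve $c$. Pinching produces a geometrically finite $[\rho] \in \partial AH(M)$ (it acquires a new cusp, so it cannot be interior); in the simplest case this is the complete finite-area Fuchsian realization of $S$, included in $\textup{PSL}(2,\mathbb{C})$ via $\textup{PGL}(2,\mathbb{R}) \subset \textup{PSL}(2,\mathbb{C})$.

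The main obstacle is to show that such a boundary representation is genuinely primitive-stable, i.e.\ that \emph{every} primitive element remains uniformly quasigeodesic even though the manifold now has a cusp. The geometric mechanism is that a primitive element is carried by a simple closed geodesic on $S$, and a simple closed geodesic is disjoint from a fixed horoball neighborhood of the cusp; consequently its orbit never penetrates the parabolic thin part deeply and can absorb only a bounded number of consecutive powers of the parabolic element. The delicate point --- the technical heart of the argument --- is to make the quasigeodesic constants \emph{uniform over the infinitely many primitives at once}, ruling out a sequence of primitives that spirals ever more tightly around the cusp. I expect to control this through the geometry of the thick part of $S$ together with the fact that the pinched class is exactly the non-primitive element distinguished by the orientation double cover, so that no primitive can absorb unbounded powers of the parabolic.
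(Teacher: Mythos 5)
Your setup (the definition of $\mathcal{PS}(M)$, openness, $Out(\pi_1(M))$-invariance) and the inclusion of the interior of $AH(M)$ all match the paper, but the step that carries the real content of the theorem --- producing points of $\partial AH(M)$ inside $\mathcal{PS}(M)$ --- rests on a construction that cannot work. You propose to pinch the curve on $\partial M$ representing $c^2$, where $c$ is a one-sided simple closed curve on the base surface $B$. If $\rho\in AH(M)$ had $\rho(c^2)$ parabolic, then $\rho(c)$ would itself be parabolic: $\rho(c)$ cannot be elliptic since $\rho$ is discrete and faithful and $\pi_1(M)$ is torsion-free, and a loxodromic element squares to a loxodromic one. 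But $c$ \emph{is} primitive in the sense used here --- it is a simple closed curve on $B$ --- so such a representation would send a primitive element to a parabolic and hence lie \emph{outside} $\mathcal{PS}(M)$ (by the easy direction of the characterization: the line through the powers of $c$ cannot map to a quasi-geodesic). Your guiding mechanism (``the pinched class is exactly the non-primitive element\dots so that no primitive can absorb unbounded powers of the parabolic'') fails precisely because the pinched class $c^2$ has a primitive square root, which becomes parabolic along with it. In fact the situation is worse: such a boundary point does not exist at all. The paper shows (Proposition \ref{notdd}) that for $[\rho]\in AH(M)$ every element with parabolic image is homotopic into $\partial M$, whereas the core of an essential M\"obius band is not (a one-sided curve is orientation-reversing on $B$, hence does not lie in the index-two subgroup $\pi_1(\partial M)$); so $\rho(c)$, and therefore $\rho(c^2)$, is never parabolic on $AH(M)$. (Also, for the closed base surfaces considered here there is no ``boundary/relator curve,'' and no finite-area Fuchsian realization arises.) The boundary points that actually lie in $\mathcal{PS}(M)$ are those with \emph{no} primitive parabolics, for instance purely hyperbolic points of $\partial AH(M)$, or pinches along curves of $\partial M$ whose root in $\pi_1(M)$ is not represented by a simple closed curve on $B$.

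The paper reaches these points by proving the full characterization (Proposition \ref{characterizePS}): $[\rho]\in AH(M)$ is primitive-stable if and only if no primitive element is parabolic. The mechanism is not the surface fact that simple geodesics avoid cusp horoballs; it is genuinely three-dimensional, and it supplies exactly the uniformity over all primitives that you concede you only ``expect to control.'' One first reduces (Lemma \ref{compact}) to showing that all primitive closed geodesics of $N_\rho$ lie in a single compact set; each primitive geodesic is realized by a simplicial hyperbolic surface $B\to N_\rho$, which lifts to a $\tilde\theta$-equivariant surface in the double cover $\tilde N$, where $\tilde\theta$ is the nontrivial deck transformation; since $\tilde\theta$ interchanges the two components of the complement of a compact core $\tilde C$, a primitive geodesic escaping every compact set would force this surface to connect points that are far apart modulo the thin part, contradicting Bonahon's bounded diameter lemma. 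This is where the twisted $I$-bundle structure enters --- not through the non-primitivity of $c^2$. Separately, your proper-discontinuity step has a gap: representations in $\mathcal{PS}(M)$ need not be cocompact or even discrete, so outer automorphisms are not ``realized by quasi-isometries of $\H^3$,'' and the finiteness assertion you invoke is left unproved. The paper instead shows that primitive-stability traps $l_\rho(g)/\|g\|$ between uniform positive constants on a compact set $C$, so any $[f]$ with $f\cdot C\cap C\neq\emptyset$ distorts translation length on the finite set $\{x_i,\,x_ix_j\}$ of primitives by a bounded factor, and then proves, via the cocompact action of $\pi_1(B)$ on $\H^2$ and a comparison of axes, that only finitely many outer automorphisms can do so.
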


Naturally, we would like to know whether $\mathcal{PS}(M)$ is a maximal domain of discontinuity.  Toward this end we show the following.

\begin{theorem} \label{maximality}
Let $[\rho]$ be an element of $AH(M)$.  Then $[\rho]$ lies in the complement of $\mathcal{PS}(M)$ if and only if there exists a primitive element $g$ of $\pi_1(M)$ such that $\rho(g)$ is parabolic.  Moreover, if $\rho$ lies in $AH(M) - \mathcal{PS}(M)$, then $\rho$ does not lie in any domain of discontinuity for the action of $Out(\pi_1(M))$ on $\mathcal{X}(M)$.
\end{theorem}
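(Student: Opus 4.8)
The plan is to treat the two assertions separately: first the biconditional characterizing $AH(M)\setminus\mathcal{PS}(M)$, and then the maximality statement. Throughout I would use the fact, established in Theorem~\ref{twistibund}, that the interior of $AH(M)$ is contained in $\mathcal{PS}(M)$; hence any $[\rho]\in AH(M)\setminus\mathcal{PS}(M)$ automatically lies on $\partial AH(M)$, which lets me restrict attention to boundary representations.

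For the biconditional, the direction ``primitive parabolic $\Rightarrow$ not primitive-stable'' is immediate from the definition: a primitive-stable representation sends every primitive element to a uniform quasigeodesic in $\mathbb{H}^3$, whereas a parabolic $\rho(g)$ has translation length zero, so $d_{\mathbb{H}^3}(o,\rho(g)^n o)$ grows like $\log n$ while the word length $|g^n|$ grows linearly, violating any quasigeodesic bound. Thus $\rho(g)$ parabolic for a primitive $g$ forces $[\rho]\notin\mathcal{PS}(M)$. The substantive direction is the converse, which I would prove in contrapositive form: if $\rho$ is discrete and faithful and no primitive element of $\pi_1(M)$ is sent to a parabolic, then $[\rho]\in\mathcal{PS}(M)$. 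Here I would invoke tameness and the geometric structure of $N_\rho=\mathbb{H}^3/\rho(\pi_1(M))$ together with the analysis of curves on a twisted $I$-bundle from the earlier sections. The point is to show that, absent an accidental primitive parabolic, the geodesic representatives of all primitive elements stay uniformly quasigeodesic: on a twisted $I$-bundle a primitive curve can fail Minsky's criterion only by becoming arbitrarily short, and a family of primitive curves whose lengths degenerate must limit onto a parabolic, so ruling out primitive parabolics yields the uniform constant witnessing primitive-stability.

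For the maximality statement I would exhibit, for any $[\rho]\in AH(M)\setminus\mathcal{PS}(M)$, a sequence of distinct outer automorphisms and a sequence of characters accumulating so as to destroy proper discontinuity on every neighborhood of $[\rho]$. By the biconditional there is a primitive $g$ with $\rho(g)$ parabolic; realizing $g$ by a two-sided simple closed curve makes the infinite-order twist $T_g\in Out(\pi_1(M))$ available, using $Out(\pi_1(M))\cong\mathrm{MCG}^{\pm}(N)$ where $N$ is the base surface. Choose $[\rho_n]\in\mathrm{int}(AH(M))$ with $[\rho_n]\to[\rho]$; since $\operatorname{tr}\rho_n(g)\to\operatorname{tr}\rho(g)=\pm 2$, each $\rho_n(g)$ is loxodromic with complex length $\lambda_n\to 0$. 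Because $T_g$ fixes $g$ and inserts $\rho_n(g)^{\pm k}$ along generators crossing $g$, diagonalizing $\rho_n(g)$ shows $\rho_n\circ T_g^{-k}\to\rho_n$ as soon as $\rho_n(g)^{k}\to I$; choosing $k_n\to\infty$ with $k_n|\lambda_n|\to 0$ (possible since $\lambda_n\to 0$) gives $\rho_n(g)^{k_n}\to I$ and hence $T_g^{k_n}\cdot[\rho_n]\to[\rho]$. Thus $[\rho_n]\to[\rho]$ and $T_g^{k_n}\cdot[\rho_n]\to[\rho]$ with the $T_g^{k_n}$ pairwise distinct, so for any compact neighborhood $K$ of $[\rho]$ one has $T_g^{k_n}(K)\cap K\neq\emptyset$ for infinitely many $n$; the action therefore fails to be properly discontinuous on every open set containing $[\rho]$, and $[\rho]$ lies in no domain of discontinuity.

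I expect the main obstacle to be the converse direction of the biconditional: showing that the mere absence of a primitive parabolic forces primitive-stability requires a uniform geometric control of the geodesic representatives of all primitive elements, hence a genuine analysis of the ends of $N_\rho$ and of how primitive curves behave under the bundle involution, rather than any formal manipulation. A secondary technical point is ensuring the primitive parabolic $g$ is carried by a two-sided curve so that an honest infinite-order twist exists; if $g$ is one-sided I would replace $T_g$ by the infinite-order mapping class supported near $g$ (a crosscap slide, or a twist about the two-sided boundary of a regular neighborhood of $g$), which acts on $\rho_n(g)$ in the same asymptotically unipotent fashion and drives the same accumulation.
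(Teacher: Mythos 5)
Your proposal has genuine gaps in both substantive halves; the only part that matches the paper and works as written is the easy direction (a primitive parabolic immediately kills primitive-stability).

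For the forward direction of the biconditional, the mechanism you propose is wrong. You claim that on a twisted $I$-bundle a primitive curve can fail Minsky's criterion ``only by becoming arbitrarily short,'' and that degenerating lengths ``must limit onto a parabolic.'' Neither assertion holds: for a \emph{fixed} discrete faithful $\rho$, primitive-stability is equivalent (Lemma \ref{compact}) to all primitive closed geodesics of $N_\rho$ lying in a single compact set, and the danger is not shortness but primitive geodesics exiting a degenerate end of $N_\rho$. Geodesics exiting a bounded-geometry degenerate end have length bounded above \emph{and below}, and a sequence of distinct short geodesics in a fixed manifold produces no parabolic element of $\rho(\pi_1(M))$ whatsoever --- parabolicity is a property of individual elements of a fixed group, and no limit of representations is being taken. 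The paper closes this gap with a genuinely three-dimensional symmetry argument that your sketch only gestures at: each primitive geodesic is realized by a simplicial hyperbolic surface $B\to N_\rho$ whose lift to the double cover $\tilde N\simeq S\times\mathbb{R}$ is equivariant under the covering involution $\tilde\theta$; since $\tilde\theta$ exchanges the two components of $\tilde N - \tilde C$ for a compact core $\tilde C$, a primitive geodesic escaping the end would force its equivariant surface to connect points far out in \emph{both} components, so its diameter modulo the thin part tends to infinity, contradicting Bonahon's bounded diameter lemma. (Incidentally, your opening reduction ``by Theorem \ref{twistibund} the interior lies in $\mathcal{PS}(M)$'' is circular in the paper's logic: that containment is itself deduced from Proposition \ref{characterizePS}.)

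For the maximality statement, your key analytic claim is false: choosing $k_n\to\infty$ with $k_n|\lambda_n|\to 0$ does \emph{not} give $\rho_n(g)^{k_n}\to I$. Conjugate so that $\rho_n(g)=\begin{pmatrix} e^{\lambda_n/2} & 1 \\ 0 & e^{-\lambda_n/2}\end{pmatrix}$, which converges to a parabolic. Then the off-diagonal entry of $\rho_n(g)^{k_n}$ is $\sinh(k_n\lambda_n/2)/\sinh(\lambda_n/2)\sim k_n\to\infty$, so the powers diverge; diagonalizing does not help because the eigenbasis degenerates as $\rho_n(g)$ approaches the parabolic, and consequently the twisted characters $T_g^{k_n}\cdot[\rho_n]$ leave every compact set rather than converging to $[\rho]$. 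To get $\rho_n(g)^{k_n}\to I$ one needs $k_n\lambda_n$ within $o(|\lambda_n|)$ of $2\pi i\mathbb{Z}$, i.e.\ rotation-dominated approximates with Diophantine control, which an arbitrary sequence in the interior of $AH(M)$ does not provide. This is exactly why the paper instead uses Minsky's observation (Lemma \ref{infstab}): the square-trace of $g$ is holomorphic and nonconstant, hence open, on a neighborhood of $AH(M)$, so every neighborhood of $[\rho]$ contains characters --- necessarily outside $AH(M)$ --- sending $g$ to a finite-order elliptic; such a character is fixed \emph{exactly} by the corresponding power of the Dehn twist, hence has infinite stabilizer, and no limiting argument is needed. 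Finally, the paper must and does rule out the case that the primitive parabolic $g$ is one-sided (core of an essential M\"obius band), via a compact-core and covering-space argument showing such a curve is never parabolic for $[\rho]\in AH(M)$; your fallback for that case rests on the same flawed convergence claim, so it does not repair the argument.
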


 A primitive element of $\pi_1(M)$ is one which is associated to a simple closed curve on the base surface. In particular, all purely hyperbolic points in $AH(M)$ lie in $\mathcal{PS}(M)$.

The notion of primitive-stable representations was first introduced by Minsky in \cite{min} where he proved a result analogous to Theorem \ref{twistibund} for handlebodies; namely, if $H_g$ is a genus $g$ hyperbolizable handlebody, then the set of primitive-stable representations, denoted $\mathcal{PS}(H_g)$, is a domain of discontinuity for the action of  $Out(F_g)$ strictly larger than the interior of $AH(H_g)$ where $F_g$ is the free group on $g$ generators.  In an upcoming paper (\cite{lee}), we extend the notion of primitive-stability to compression bodies.

Canary-Storm (\cite{cs}) showed that when $M$ is a compact hyperbolizable $3$-manifold with incompressible boundary and no toroidal boundary components that is not an $I$-bundle, then there exists an $Out(\pi_1(M))$-invariant open set $W(M)$ containing the interior of $AH(M)$ and containing points on $\partial AH(M)$ on which the action is properly discontinuous.  Canary-Storm use a different method of constructing this set that involves the characteristic submanifold of $M$ but the set they construct $W(M)$ also contains all purely hyperbolic points.  It is well known that in the case where $M$ is a trivial $I$-bundle over an orientable hyperbolic surface $S$ no point on the boundary of $AH(M)$ can lie in a domain of discontinuity for the action of $Out(\pi_1(M))$ on $\mathcal{X}(M)$ (see Section \ref{bpoints}).  Using these two results and Theorem \ref{twistibund} we can conclude the following.

\begin{corollary} \label{cor}
Let $M$ be a compact, orientable, hyperbolizable 3-manifold with nonempty incompressible boundary and no toroidal boundary components.  Then there exists an open, $Out(\pi_1(M))$-invariant set, containing the interior of $AH(M)$ and points on the boundary of $AH(M)$, on which $Out(\pi_1(M))$ acts properly discontinuously if and only if $M$ is not a trivial $I$-bundle over an orientable hyperbolic surface.  Moreover, in the case that $M$ is not a trivial $I$-bundle, then this set contains all purely hyperbolic points in $AH(M)$.
\end{corollary}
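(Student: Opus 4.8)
The plan is to split into cases according to whether $M$ is an $I$-bundle, and, when it is, to use the classification of orientable $I$-bundles to reduce to the two theorems already in hand. First I would record the relevant dichotomy. An $I$-bundle over a surface $F$ is classified by the monodromy homomorphism $\phi\colon \pi_1(F)\to\mathbb{Z}/2\mathbb{Z}$ of the fiber, and, writing $E\to F$ for the associated line bundle, a Stiefel--Whitney computation gives $w_1(M)=\pi^*\!\left(w_1(F)+\phi\right)$, so the total space is orientable precisely when $\phi=w_1(F)$. Consequently a \emph{compact orientable} $I$-bundle with incompressible boundary over a hyperbolic surface is either (i) the trivial bundle $S\times I$ over an orientable hyperbolic surface $S$ (with $F=S$ orientable and $\phi$ trivial), or (ii) the orientable twisted $I$-bundle over a nonorientable hyperbolic surface (with $\phi$ equal to the nontrivial orientation character). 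Case (ii) is exactly the situation of Theorem \ref{twistibund}, and in both cases $\partial M$ is a union of orientable hyperbolic surfaces, hence contains no tori, consistent with the standing hypotheses.

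For the backward implication, suppose $M$ is not a trivial $I$-bundle over an orientable hyperbolic surface. Either $M$ is not an $I$-bundle at all, in which case the theorem of Canary--Storm (\cite{cs}) supplies an open, $Out(\pi_1(M))$-invariant set $W(M)$ containing the interior of $AH(M)$ together with points of $\partial AH(M)$, on which the action is properly discontinuous; or $M$ is an $I$-bundle, and then by the dichotomy above it must be the orientable twisted $I$-bundle over a nonorientable hyperbolic surface, so Theorem \ref{twistibund} supplies $\mathcal{PS}(M)$ with the same features. In both cases the cited results further guarantee that the constructed set contains all purely hyperbolic points of $AH(M)$ (for $\mathcal{PS}(M)$ this is the remark following Theorem \ref{maximality}), which gives the ``moreover'' clause.

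For the forward implication I would argue by contraposition. Assume $M=S\times I$ is the trivial $I$-bundle over an orientable hyperbolic surface, so that $\pi_1(M)\cong\pi_1(S)$ and $\mathcal{X}(M)\cong\mathcal{X}(S)$ with $Out(\pi_1(M))$ acting as the (extended) mapping class group. By the result recalled in Section \ref{bpoints}, no point of $\partial AH(M)$ lies in any domain of discontinuity for this action. Hence if $U$ were an open, $Out(\pi_1(M))$-invariant set on which the action is properly discontinuous and $U$ met $\partial AH(M)$ in some point $x$, then $U$ would itself be a domain of discontinuity containing $x$, a contradiction. Therefore no set with the required properties exists, completing the biconditional.

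The main obstacle I anticipate is not the dynamical input---that is entirely packaged into Canary--Storm, Theorem \ref{twistibund}, and the boundary statement of Section \ref{bpoints}---but pinning down the topological classification cleanly, namely that the \emph{only} orientable $I$-bundles arising here are $S\times I$ and the orientable twisted bundle over a nonorientable surface, and checking that hyperbolizability together with incompressibility and absence of toroidal boundary are preserved under this reduction. Care is also needed to confirm that ``domain of discontinuity'' in Section \ref{bpoints} means exactly an open invariant set with properly discontinuous action, so that any candidate $U$ above is legitimately obstructed.
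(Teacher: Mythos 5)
Your proposal is correct and follows essentially the same route as the paper: the paper derives the corollary from exactly this trichotomy, citing Canary--Storm for manifolds that are not $I$-bundles, Theorem \ref{twistibund} for the twisted $I$-bundle case, and the Section \ref{bpoints} result (Proposition \ref{qfs}, via Lemma \ref{infstab}) to rule out any such set when $M$ is a trivial $I$-bundle over an orientable surface. The only difference is that you make explicit the classification of orientable $I$-bundles (via the $w_1$ computation) and the preservation-of-hypotheses checks, which the paper leaves implicit.
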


We remark that Corollary \ref{cor} suggests that the dynamics of a nonorientable surface group on its associated $\textup{PSL}(2, \mathbb{C})$-character variety differs significantly from that of an orientable surface group.  In the case of orientable surface groups, it is an open question whether the interior of deformation space is the largest domain of discontinuity for the action on its associated character variety.

We conclude this introduction with a brief outline of the paper.  In section \ref{prelims} we define the notion of primitive-stable representations and show that the collection of primitive-stable representations $\mathcal{PS}(M)$ forms an open $Out(\pi_1(M))$-invariant subset of $\mathcal{X}(M)$ on which $Out(\pi_1(M))$ acts properly discontinuously.  In section \ref{bpoints} we break up the proof of Theorem \ref{maximality} into two parts.  In Proposition \ref{characterizePS} we show that an element $[\rho]$ in $AH(M)$ lies in $\mathcal{PS}(M)$ if and only if $\rho(g)$ is not parabolic for any primitive element $g$ of $\pi_1(M)$.  Minsky observed that if a representation $[\sigma]$ maps the core curve of an essential annulus to a parabolic element, then $[\sigma]$ cannot lie in any domain of discontinuity.  Using this result and Proposition \ref{characterizePS} it follows that any point in $AH(M)$ outside of $\mathcal{PS}(M)$ does not lie in a domain of discontinuity, proving the second assertion of Theorem \ref{maximality}.  We conclude with the observation that Minsky's result can also be applied to show that no element in the boundary of quasi-fuchsian space can lie in a domain of discontinuity.

\vspace{0.2in}
\textbf{Acknowledgements: } I would like to thank my advisor Dick Canary for his constant support and guidance.  I would also like to thank Juan Souto and Nina White for several useful conversations. 

\section{Primitive stability} \label{prelims}
 The goal of this section is to show that the set $\mathcal{PS}(M)$ of primitive-stable representation is an open, $Out(\pi_1(M))$-invariant set on which the action is properly discontinuous.  Let $B$ be a nonorientable hyperbolic surface and let $\tilde B$ be its orientable double cover.  Let $M$ be a twisted $I$-bundle over $B$, namely $$M = \tilde B \times I / (x,t) \sim (\theta(x), 1-t)$$
 where $\theta$ is an orientation reversing, fixed-point free involution of $\tilde B$ such that $\tilde B / \langle \theta \rangle$ is homeomorphic to $B$.  Let $G$ be the fundamental group of $M$.

\begin{definition}
We say an element $g$ in $G$ is \emph{primitive} if it can be represented by a simple closed curve on the base surface $B$.
\end{definition}


As $G$ is torsion free, every element of $G$ acts with North-South dynamics on $C_S(G)$ the Cayley graph of $G$ with finite symmetric generating set $S$.
If $g_-$ and $g_+$ denote the repelling and attracting fixed points of $g$ on $\partial C_S(G)$, let $L(g)$ denote the set of geodesics connecting $g_-$ and $g_+$.  Let $\mathcal{P}$ denote the set of geodesics $l$ such that $l$ is contained in $L(g)$ for some primitive element $g$.

Given a representation $\rho: G \rightarrow \textup{PSL}_2(\mathbb{C})$ and a basepoint $x$ in $\H^3,$ there exists a unique $\rho$-equivariant map $\tau_{\rho, x}: C_S(G) \rightarrow \H^3$ taking  the identity to $x$ and edges to geodesic segments.

\begin{definition}
A representation $\rho: G \rightarrow \textup{PSL}_2(\mathbb{C})$ is called \emph{$(K, A)$-primitive-stable} if there exists a basepoint $x$ in $\H^3$ such that $\tau_{\rho, x}$ takes all geodesics of $\mathcal{P}$ to $(K, A)$-quasi-geodesics.
\end{definition}

Recall that a path $\alpha: \mathbb{R} \rightarrow \H^3$, parametrized by arclength, is a $(K, A)$-quasi-geodesic if for any $s$ and $t$ in $\mathbb{R}$ the following inequality holds.

$$
\frac{1}{K}|s-t|-A \leq d(\alpha(s), \alpha(t)).
$$

Primitive-stability is independent of the choice of basepoint in $\H^3$ and the choice of generators $S$, although the constants $(K, A)$ will change (see \cite{thesis} for details).

\begin{remark}
$(K, A)$-primitive-stability is invariant under conjugation by elements of $\textup{PSL}(2, \mathbb{C})$.  Two irreducible representations in $\textup{Hom}(G, \textup{PSL}(2, \mathbb{C}))$ lie in the same fiber of $\pi: \textup{Hom}(\pi_1(M), \textup{PSL}(2, \mathbb{C})) \rightarrow \mathcal{X}(M)$ if and only if they differ by conjugation (see \cite{hp}).  As reducible representations are never primitive-stable, primitive-stability is well defined on $\mathcal{X}(M)$.
\end{remark}

We will begin by showing that $\mathcal{PS}(M)$ is open and $Out(\pi_1(M))$-invariant.  Openness will follow from the fact that quasi-geodesics remain quasi-geodesics under small perturbations.  $Out(\pi_1(M))$-invariance will follow from the fact that automorphisms of $G$ preserve the set of primitive elements.
\begin{lemma} \label{basics}
$PS(M)$ is an open, $Out(\pi_1(M))$-invariant subset of $\mathcal{X}(M)$.  Moreover given any $[\rho_0]$ in $\mathcal{PS}$ there exists constants $(K_0, A_0)$ and a neighborhood $U_{[\rho_0]}$ such that any element $[\sigma]$ in $U_{[\rho_0]}$ is $(K_0, A_0)$-primitive-stable.
\end{lemma}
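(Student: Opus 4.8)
My plan is to prove openness and the quantitative ``moreover'' clause simultaneously, since the latter is just the effective form of the former, and then treat $Out(\pi_1(M))$-invariance separately. Because $(K,A)$-primitive-stability is conjugation-invariant and fails for reducibles, I would first pass to the representation space: near an irreducible representation the quotient map $\pi:\textup{Hom}(G,\textup{PSL}_2(\mathbb{C}))\to\mathcal{X}(M)$ is open and nearby characters lift to nearby homomorphisms, so it suffices to find a neighborhood of a fixed lift $\rho_0$ in $\textup{Hom}(G,\textup{PSL}_2(\mathbb{C}))$ on which the constants are uniform, and then push down. The two tools I would lean on are (i) the $\rho$-equivariance of $\tau_{\rho,x}$, which lets me reduce the behavior of $\tau_{\rho,x}$ along an arbitrary geodesic of $\mathcal{P}$ to finitely many ``local patterns,'' and (ii) the local-to-global (stability) principle for quasi-geodesics in the Gromov-hyperbolic space $\H^3$; I would also use that an arclength-parametrized path is automatically $1$-Lipschitz, so the one-sided inequality defining $(K,A)$-quasi-geodesics above is in fact a genuine two-sided quasi-geodesic condition to which the standard stability theory applies.

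For openness, suppose $\rho_0$ is $(K,A)$-primitive-stable with basepoint $x$. First I would invoke the stability principle in $\H^3$ to obtain, from the relaxed local constants $(2K,2A)$, a window length $L$ and output constants $(K_0,A_0)$, all depending only on $(K,A)$, such that any arclength path whose every subpath over $L$ consecutive edges is a $(2K,2A)$-quasi-geodesic is globally a $(K_0,A_0)$-quasi-geodesic. Fixing this $L$, there are only finitely many words $w$ in $S$ of length at most $L$, and for each one $\tau_{\rho_0,x}(w)$ is a subsegment of a $(K,A)$-quasi-geodesic, hence itself $(K,A)$. Since $S$ is finite and these patterns are finite in number, the segments $\tau_{\sigma,x}(w)$ vary continuously with $\sigma$, so there is a neighborhood $U$ of $\rho_0$ on which every $\tau_{\sigma,x}(w)$ is a $(2K,2A)$-quasi-geodesic. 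Then for any $\sigma\in U$ and any $l\in\mathcal{P}$, equivariance shows that each length-$L$ window of $\tau_{\sigma,x}(l)$ is the image under an isometry $\sigma(\gamma)$ of one of these finitely many patterns, so it is a $(2K,2A)$-quasi-geodesic; the stability principle then upgrades $\tau_{\sigma,x}(l)$ to a $(K_0,A_0)$-quasi-geodesic, uniformly in $l$. Hence every $\sigma\in U$ is $(K_0,A_0)$-primitive-stable, giving both claims at once.

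For invariance, I would take $f\in Aut(G)$ and show that $\rho$ primitive-stable implies $\rho\circ f^{-1}$ primitive-stable. The geometric input is that $f^{-1}$ sends primitive elements to primitive elements: an automorphism of the surface group $G$ is realized by a homeomorphism of $B$, which carries simple closed curves to simple closed curves. Since $f^{-1}$ is an isomorphism it is a quasi-isometry of $C_S(G)$ whose boundary extension sends $g_{\pm}$ to $(f^{-1}(g))_{\pm}$; thus $f^{-1}$ carries each $l\in\mathcal{P}$ to a quasi-geodesic with the same endpoints at infinity as the geodesic $l'\in\mathcal{P}$ determined by the primitive element $f^{-1}(g)$, and by the Morse lemma in the hyperbolic space $C_S(G)$ the two lie within uniform Hausdorff distance. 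On vertices $\tau_{\rho\circ f^{-1},x}=\tau_{\rho,x}\circ f^{-1}$, and $\tau_{\rho,x}$ is coarsely Lipschitz, so $\tau_{\rho\circ f^{-1},x}(l)$ stays within bounded Hausdorff distance of $\tau_{\rho,x}(l')$, which is a uniform quasi-geodesic because $\rho$ is primitive-stable on $\mathcal{P}$. As a path properly fellow-traveling a quasi-geodesic is again a quasi-geodesic with controlled constants, $\rho\circ f^{-1}$ is primitive-stable, with constants uniform over all $l$.

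I expect the main obstacle to be the uniformity in the openness step: the challenge is to control infinitely many primitive geodesics with a single neighborhood $U$ and a single pair $(K_0,A_0)$. This is precisely what the interplay of equivariance (reducing to finitely many local patterns) and the local-to-global principle is designed to overcome, and it is the technical heart of the lemma; by contrast, the invariance half is comparatively soft once one knows that automorphisms of $G$ preserve primitivity.
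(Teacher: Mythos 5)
Your proposal is correct, but it reaches both conclusions by a genuinely different route than the paper. For openness, the paper does not invoke the local-to-global stability theorem directly; instead it uses Minsky's plane criterion (Lemma \ref{qg condition}): $\tau_{\rho,x}(l)$ is a quasi-geodesic if and only if the perpendicular-bisector planes $P_{j,i}$ of the segments $[v_{ji},v_{(j+1)i}]$ are nested, with each separating its neighbors, and have definite gaps $d(P_{j,i},P_{(j+1),i})>c$. That condition is manifestly open under perturbation of finitely many matrix entries (one only shrinks $c_0$ to $c'<c_0$), and the converse half of Minsky's lemma converts it back into uniform constants $(K_0,A_0)$ --- which is precisely what your relaxed-constants-plus-local-to-global scheme accomplishes. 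Both arguments share the same skeleton: equivariance, together with the fact that primitivity is conjugation-invariant (so $\gamma^{-1}l\in\mathcal{P}$ whenever $l\in\mathcal{P}$), reduces the problem to finitely many configurations based at the identity (the paper uses subpaths of length $3i_0$; you use windows of length $L$), continuity of $\sigma\mapsto\sigma(g)\cdot x$ handles those, and a local-to-global criterion restores uniformity over all of $\mathcal{P}$, yielding the ``moreover'' clause for free. Your version buys generality (it works for any Gromov-hyperbolic target and avoids proving the plane lemma); the paper's version makes the openness of the local condition completely transparent. For invariance the routes also differ: the paper changes the generating set, observing that $\tau_{\rho\circ f^{-1},x}:C_{f(S)}(G)\to\H^3$ has the same image as $\tau_{\rho,x}:C_S(G)\to\H^3$, and then cites the independence of primitive-stability from the choice of generators; you stay in one Cayley graph and run a quasi-isometry/Morse-lemma argument, which in effect re-proves that independence and is essentially the content of the paper's Lemma \ref{quasigoesquasi}, stated there for the proper-discontinuity proof.

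Two small points to tighten. First, in the openness step it is not true that every word $w$ of length at most $L$ has $\tau_{\rho_0,x}(w)$ a subsegment of a $(K,A)$-quasi-geodesic (backtracking words fail); you should restrict to the finitely many words occurring as length-$L$ subwords of geodesics in $\mathcal{P}$, which is all your argument ever uses. Second, in the invariance step, lying within bounded Hausdorff distance of a quasi-geodesic does not by itself imply being a quasi-geodesic (a path may oscillate inside a bounded neighborhood); what rescues the step is that $f^{-1}$ is bi-Lipschitz on $C_S(G)$, so $f^{-1}|_l$ followed by the Morse projection to $l'$ gives a coarsely monotone parameter correspondence, and pointwise proximity with matched parameters does transfer the quasi-geodesic property. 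With that made explicit, your ``properly fellow-traveling'' step is exactly right.
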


\begin{proof}
We start by showing that $\mathcal{PS}(M)$ is open.  The second statement will follow as a consequence of the proof of openness.
Let $L$ be a geodesic in $C_S(G)$, $L'$ the image of $L$ under $\tau_{\rho, x}$, $\{v_i\}$ the image of the vertex sequence of $L$, and $P_{j,i}$ the plane that perpendicularly bisects the geodesic segment $[v_{ji}, v_{(j+1)i}]$.
We will need the following characterization of quasi-geodesics due to Minsky (for proof see \cite{min} or \cite{thesis}).
\begin{lemma} [Minsky] \label{qg condition}
Given $(K, A)$, there exists $c > 0$ and $i \in \mathbb{N}$ such that if $L'=\tau_{\rho,x}(L)$ is a $(K, A)$-quasi-geodesic, then $P_{j,i}$ separates $P_{(j+1),i}$ and $P_{(j-1),i}$ and $d(P_{j,i}, P_{(j+1),i})>c$. Conversely, given $c>0$ and $i \in \mathbb{N}$ there exists $(K', A')$ such that if $L'=\tau_{\rho, x}(L)$ has the property that $P_{j,i}$ separates $P_{(j+1),i}$ and $P_{(j-1),i}$ and $d(P_{j,i}, P_{(j+1),i})>c$ then $L'$ is a $(K', A')$-quasi-geodesic.
\end{lemma}

To show that $\mathcal{PS}(M)$ is open, it suffices to show the following:

\begin{lemma} \label{qg on open sets}
For any primitive-stable representation $[\rho_0]$ in $\mathcal{X}(M)$, there exists $U_{[\rho_0]}$ a neighborhood of $[\rho_0]$ and constants $c'>0$, $i' \in \mathbb{N}$ such that for any $[\sigma]$ in $U_{[\rho_0]}$ and any geodesic $l$ in $\mathcal{P}$, the planes, $P_{j,i'}$ corresponding to $\tau_{\sigma, x}(l)$ have the property that $P_{j,i'}$ separates $P_{(j+1),i'}$ and $P_{(j-1),i'}$ and $d(P_{j,i'}, P_{(j+1),i'})>c'$.
\end{lemma}

We outline the proof of Lemma \ref{qg on open sets} whose details can be found in \cite{thesis}.  Suppose that $[\rho_0]$ is $(K_0, A_0)$-primitive-stable.  Let $i_0$ and $c_0$ be the constants obtained from Lemma \ref{qg condition}.  Let $i'=i_0$ and choose $c'<c_0$.  Since the map $\phi_{g}: \textup{Hom}(G, \textup{PSL}(2, \mathbb{C})) \rightarrow \H^3$ that sends $\rho \mapsto \rho(g)\cdot x$ is continuous and geodesics segments vary continuously with respect to their endpoints, for any pair of vertices $v_{ji_0}$ and $v_{(j+1)i_0}$ the point in the unit tangent bundle determining $P_{j,i_0}$ will vary continuously over $\textup{Hom}(G, PSL(2, \mathbb{C}))$.  As $G$ acts transitively and isometrically on $C_S(G)$ it suffices to check that the separation and distance properties of the planes $P_{j,i_0}$ hold for subpaths of elements of $\mathcal{P}$ beginning at the identity of length $3i_0$.  In particular we are only concerned with what happens to a finite number of group elements.  So if we take a lift $\rho_0$ of $[\rho_0]$ in $\textup{Hom}(G, PSL(2, \mathbb{C}))$ then there exists an open neighborhood $U_{\rho_0}$ of $\rho_0$ such that $P_{j,i'}$ separates $P_{(j+1),i'}$ and $P_{(j-1),i'}$ and $d(P_{j,i'}, P_{(j+1),i'})>c'$ whose image in $\mathcal{X}(M)$ is the desired neighborhood of $[\rho_0]$.  This completes the proof of Lemma \ref{qg on open sets} and hence openness.


To see that $\mathcal{PS}(M)$ is $Out(\pi_1(M))$-invariant first observe that since homotopy equivalences of closed surfaces are homotopic to homeomorphisms, any automorphism $f$ of $\pi_1(M) \cong \pi_1(B)$ preserves the set of primitive elements.  The image of $\tau_{\rho \circ f^{-1}, x}: C_{f(S)}(G) \rightarrow \H^3$ coincides with the image of $\tau_{\rho,x}: C_S(G) \rightarrow \H^3$.  Since primitive-stability is independent of the choice of generators of $G$, $\rho \circ f^{-1}$ is also primitive-stable.

\end{proof}

We finish this section by showing that the action of $Out(\pi_1(M))$ on $\mathcal{PS}(M)$ is properly discontinuous.  The idea is that primitive-stability will imply that translation length of a primitive element in the Cayley graph is coarsely the same as translation length of the corresponding isometry in $\H^3$.  To show proper discontinuity of the $Out(\pi_1(M))$-action it will suffice to show that only finitely many automorphisms, up to conjugation, can change the translation length of primitive elements in the Cayley graph by a bounded amount.
\begin{proposition}
$Out(\pi_1(M))$ acts properly discontinuously on $\mathcal{PS}(M)$.
\end{proposition}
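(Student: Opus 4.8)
The plan is to make the heuristic above precise: the action is properly discontinuous exactly when, for every compact $C\subseteq\mathcal{PS}(M)$, the set $\{[\phi]\in Out(\pi_1(M)):[\phi]\cdot C\cap C\neq\emptyset\}$ is finite, and I would prove this by comparing translation lengths in $C_S(G)$ with translation lengths in $\H^3$. Write $\ell_S(g)$ for the stable translation length of $g$ in $C_S(G)$ and $\ell(\rho(g))$ for the translation length of the isometry $\rho(g)$. The first step is the comparison estimate: if $[\rho]$ is $(K,A)$-primitive-stable with basepoint $x$ and $\lambda:=\max_{s\in S}d(x,\rho(s)x)$, then for every primitive $g$,
$$\tfrac{1}{K}\,\ell_S(g)\;\le\;\ell(\rho(g))\;\le\;\lambda\,\ell_S(g).$$
The lower bound comes from applying the quasi-geodesic inequality to the image under $\tau_{\rho,x}$ of the $g$-axis in $C_S(G)$: since $\tau_{\rho,x}$ is $\rho$-equivariant, $\rho(g)^n$ moves a point on the image $(K,A)$-quasi-geodesic a distance at least $\frac1K n\,\ell_S(g)-A$, and dividing by $n$ and letting $n\to\infty$ kills the additive error. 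The upper bound is the Lipschitz estimate $d(x,\rho(g)^n x)\le\lambda\,|g^n|_S$, again divided by $n$.

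Next I would upgrade this to a uniform statement. Given compact $C\subseteq\mathcal{PS}(M)$, Lemma \ref{basics} provides around each point a neighborhood with uniform constants; covering $C$ by finitely many of these and lifting $C$ to a compact set of irreducible representations in $\textup{Hom}(G,\textup{PSL}(2,\mathbb{C}))$ (recall reducible representations are never primitive-stable) yields constants $K$ and $\lambda$ for which the displayed inequalities hold simultaneously for every $[\rho]\in C$ and every primitive $g$. Now suppose $[\rho]$ and $[\rho\circ\phi^{-1}]$ both lie in $C$. Because any automorphism preserves the set of primitive elements and $\ell\big((\rho\circ\phi^{-1})(g)\big)=\ell\big(\rho(\phi^{-1}(g))\big)$, chaining the upper bound for $\rho\circ\phi^{-1}$ at $g$ with the lower bound for $\rho$ at the primitive element $\phi^{-1}(g)$ gives
$$\ell_S\big(\phi^{-1}(g)\big)\;\le\;K\lambda\,\ell_S(g)\qquad\text{for every primitive }g.$$
Thus every $[\phi]$ with $[\phi]\cdot C\cap C\neq\emptyset$ distorts the translation lengths of all primitive elements by at most the fixed factor $D:=K\lambda$.

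It then remains to prove the purely group-theoretic finiteness statement: only finitely many $[\phi]\in Out(\pi_1(M))$ satisfy $\ell_S(\phi^{-1}(g))\le D\,\ell_S(g)$ for all primitive $g$. Here I would use the Dehn--Nielsen--Baer identification $Out(\pi_1(M))\cong Out(\pi_1(B))\cong\mathrm{MCG}(B)$ together with the fact that primitive elements are exactly the classes of simple closed curves on $B$. Fix a finite collection $c_1,\dots,c_k$ of simple closed curves that fills $B$. For each $i$ the quantity $\ell_S(c_i)$ is a fixed positive constant, so the distortion bound forces $\ell_S(\phi^{-1}(c_i))\le D\,\ell_S(c_i)$; since $\pi_1(B)$ is word-hyperbolic, stable translation length is comparable to cyclic word length, so there are only finitely many conjugacy classes with translation length below any fixed bound. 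Hence $(\phi^{-1}(c_1),\dots,\phi^{-1}(c_k))$ ranges over finitely many tuples of isotopy classes. Finally, by the Alexander method a mapping class is determined up to finite ambiguity by the images of a filling system of curves, so only finitely many $[\phi^{-1}]$, equivalently finitely many $[\phi]$, are possible, which is exactly proper discontinuity.

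I expect this last step to be the main obstacle. The comparison of translation lengths is a routine consequence of primitive-stability, but the finiteness argument requires the right input from surface topology: that a filling system of simple closed curves has finite stabilizer in the mapping class group and, more strongly, that a mapping class is pinned down up to finitely many possibilities by the isotopy classes of the images of such a system. Care is also needed because $B$ is nonorientable, so one must check that the Dehn--Nielsen--Baer statement, the correspondence between primitive elements and simple closed curves, and the finiteness of bounded-length conjugacy classes all go through in the nonorientable setting.
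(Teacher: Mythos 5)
Your overall strategy matches the paper's skeleton: reduce proper discontinuity to the statement that only finitely many outer automorphisms can distort the lengths of primitive elements by a bounded factor, with the length comparison between $C_S(G)$ and $\H^3$ coming from primitive-stability plus the uniform constants of Lemma \ref{basics}. The two implementations differ in both halves, however. For the comparison estimate, the paper works with honest translation length $||g||$, which forces it to introduce uniform quasi-axes for $g$ and Lemma \ref{quasigoesquasi}, and then to treat separately the finitely many conjugacy classes of short elements where the additive errors swamp the multiplicative bound; your use of stable translation length and the limit $n\to\infty$ kills every additive constant at once and yields the clean two-sided bound $\tfrac{1}{K}\ell_S(g)\le\ell(\rho(g))\le\lambda\,\ell_S(g)$ with no case analysis --- a real simplification. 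For the finiteness step, the paper argues directly with the Fuchsian action of $G$ on $\H^2$: bounded lengths of $f_k(x_i)$ and $f_k(x_ix_j)$ force the axes of the $f_k(x_i)$ to be uniformly close, so after conjugating so that a quasi-axis passes near the identity, each $f_k(x_i)$ has bounded word length, leaving finitely many choices of the generator images and hence finitely many outer classes. You instead invoke Dehn--Nielsen--Baer, the correspondence between primitive conjugacy classes and simple closed curves on $B$, and the Alexander-method fact that the pointwise stabilizer of a filling system is finite. Both routes work: the paper's is self-contained (it essentially re-proves the finiteness you cite, using only the hyperbolic geometry already in play), while yours is shorter but leans on nonorientable versions of Dehn--Nielsen--Baer and the Alexander method, which, as you note, must be checked (they do hold for closed nonorientable hyperbolic surfaces).

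One step needs repair. Your lower bound is applied to ``the $g$-axis in $C_S(G)$,'' but $g$ need not have an invariant geodesic in $C_S(G)$ --- this is precisely the difficulty the paper flags (``$g$ need not act by translation on the elements of $L(g)$'') and is its reason for introducing quasi-axes. As written, the pair of points $\tau_{\rho,x}(y)$ and $\rho(g)^n\tau_{\rho,x}(y)=\tau_{\rho,x}(g^ny)$ need not lie on a common quasi-geodesic image, so the quasi-geodesic inequality does not apply to them directly. Your limiting argument survives the standard patch: take any geodesic $l\in L(g)$ and a vertex $y$ on it; then $g^ny$ lies on the geodesic $g^nl$, which has the same endpoints at infinity as $l$, hence lies within $2\delta$ of $l$, where $\delta$ is a hyperbolicity constant for $C_S(G)$; applying the quasi-geodesic inequality to $y$ and a nearest point of $l$ to $g^ny$, and absorbing the resulting $2\delta$ and Lipschitz errors --- all additive --- still gives $\ell(\rho(g))\ge\tfrac{1}{K}\ell_S(g)$ in the limit. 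With that patch, and the nonorientable surface-topology inputs supplied, your proof is complete.
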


\begin{proof}
Let $C$ be a compact subset of $\mathcal{PS}(M)$.  Let $l_\rho(g)$ denote the translation length of $\rho(g)$ in $\H^3$ and let $ || g ||$ denote the translation length of $g$ in $C_S(G)$.
We claim that there exist constants $r$ and $R$ such that

$$
r \leq \frac{l_\rho(g)}{||g||}\leq R
$$
for all primitive elements $g$ and all representations $[\rho]$ in $C$.

We will start by finding the upper bound.  Let $R$ be the maximum of $\{d(x, \rho(s)x\}$ over $s$ in $S$ and $[\rho]$ in $C$; such a $R$ exists as $S$ is finite and the map $\mathcal{X}(M) \rightarrow \mathbb{R}$ that maps $[\rho]$ to $d(x, \rho(s)x)$ for a fixed $s$ and $x$ is continuous.  Then, $\tau_{\rho, x}$ is $R$-Lipschitz for all $[\rho]$ in $C$ and so
$$
l_\rho(g) \leq R ||g||.
$$

To see the lower bound we can first assume by Lemmas \ref{qg condition} and \ref{qg on open sets} that there exists $(K, A)$ such that every element $[\rho]$ in $C$ is $(K, A)$-primitive-stable.  We would like to compare translation length of $g$ along the geodesics in $L(g)$ with $l_\rho(g)$.  Unfortunately, $g$ need not act by translation on the elements of $L(g)$.  Instead, there exists uniform quasi-axes for $g$ meaning there exists (a not necessarily unique) $(K', A')$-quasi-geodesic $l'$ invariant under the action of $g$ where $K'$ and $A'$ do not depend on $g$.  Moreover, the translation length along these quasi-axes is uniformly bounded in $||g||$; namely, there exist constants $\lambda,c$ independent of $g$ such that $d(z, gz) \leq \lambda||g||+c$ for any $z$ in $l'$ (see \cite{thesis}).
It will suffice to consider the image of these quasi-axes by the following lemma (see \cite{thesis} for proof).
\begin{lemma} \label{quasigoesquasi}
Given $(K, A)$ and $(K', A')$, there exists $K''=K''(K,K',A,A')$ and $A''=A''(K,K',A,A')$ such that the following holds.  If $\gamma$ is a geodesic in $C_S(G)$ such that $\tau_{\rho, x}(\gamma)$ is a $(K, A)$-quasi-geodesic and $\gamma'$ is a $(K', A')$-quasi-geodesic in $C_S(G)$ with the same endpoints at infinity as $\gamma$, then $\tau_{\rho,x}(\gamma')$ is a $(K'', A'')$-quasi-geodesic.
\end{lemma}

Using Lemma \ref{quasigoesquasi} we can see that if $l'$ is a $(K', A')$-quasi-axis for $g$ then $\tau_{\rho,x}(l')$ is a $(K'', A'')$-quasi-geodesic in $\H^3$.  In particular, $\tau_{\rho, x}(l')$ lies in a $R''=R''(K'',A'')$ neighborhood of the axis for $\rho(g)$.
Hence if we take $y$ to lie on the image of a quasi-axis for $g$, we have

$$
l_\rho(g) \geq d(y, \rho(g) \cdot y) -2R'' \geq \frac{||g||}{K''}-A''-2R''.
$$

So,
$$ \frac{l_\rho(g)}{||g||} \geq \frac{1}{K''}-\frac{A''}{||g||}-\frac{2R''}{||g||}.$$

For $||g||$ larger than $2(A''-2R'')K''$,
$$
\frac{1}{K''}-\frac{A''}{||g||}-\frac{2R''}{||g||} > \frac{1}{2K''} >0.
$$

Let $[g_1], \ldots, [g_m]$ be the conjugacy classes of elements of $G$ such that $||g_i||$ is less than $2(A''-2R'')K''$.  Since $\frac{l_\rho(g_i)}{||g_i||}$ varies continuously over $\mathcal{X}(M)$ for each $g_i$ there exists a minimum value $r_i$ for $\frac{l_\rho(g_i)}{||g_i||}$ over $C$.  Take $r$ to be the minimum of $\{r_1, \ldots, r_m, \frac{1}{2K''}\}$.

Now suppose that $[f]$ is an element in $Out(G)$ such that $f \cdot C \cap C \neq \emptyset$.  Applying the above inequalities we have that for any $[\rho] $ in $C$ and any primitive word $w$,

$$
||f^{-1}(w)|| \leq \frac{1}{r} l_\rho(f^{-1}(w))= \frac{1}{r} l_{\rho \circ f^{-1}}(w) \leq  \frac{R}{r}||w||.
$$

Let $x_1, \ldots, x_n$ be a set of generators for $G$ such that each $x_i$ is primitive and $x_ix_j$ where $i \neq j$ is primitive; the standard generators for $\pi_1(B)$ will do.  Let $\mathcal{W}=\{x_i, x_ix_j |  \text{ } i \neq j\}$.  Then it suffices to show the following lemma.

\begin{lemma}
For any $N > 0$, the set $$\mathcal{A}=\{[f] \in Out(G) | \text{    } ||f(w)|| \leq N ||w|| | \text{ for all $w \in \mathcal{W}$} \}$$ is finite.
\end{lemma}

\begin{proof}
Suppose that $\{[f_k]\}$ is a sequence of infinitely many distinct elements in $\mathcal{A}$.  As $G$ acts cocompactly on $\H^2$, there is a $G$-equivariant quasi-isometry $\tau': C_S(G) \rightarrow \H^2$.  Fix the following notation: Let $\overline{g}$ denote the isometry of $\H^2$ induced by the action of $g$, $l(\overline{g})$ its translation length and $\textup{Ax}(\overline{g})$ its invariant geodesic axis.  As $G$ acts cocompactly on $\H^2$, there exists $r>0$ such that $l(\overline{f_k(x_i)}) \geq r$ on $\H^2$ for all $i$ and $k$.  Since $||f_k(x_ix_j)|| \leq 2N$, there exists $R$ such that $l(\overline{f_k(x_ix_j)}) \leq R$ all $k$ and all pairs $i,j$ such that $i \neq j$.

This implies that there exists an upper bound $D$ on the distance between $\textup{Ax}(\overline{f_k(x_i)})$ and $\textup{Ax}(\overline{f_k(x_j)})$ for all $k$ and all pairs $i,j$.  If, to the contrary, $\{d(\textup{Ax}(\overline{f_k(x_i)}), \textup{Ax}(\overline{f_k(x_j)})\}$ was unbounded, since $l(\overline{f_k(x_i)})$ and $l(\overline{f_k(x_j)})$ are bounded from below by $r$, $\{l(\overline{f_k(x_i)f_k(x_j)})\}$ would also be unbounded, a contradiction.

 Then, there also exists an upper bound $D'$ on the distance between $\textup{Ax}(f_k(x_i))$, a quasi-axis of $f_k(x_i)$, and $\textup{Ax}(f_k(x_j))$, a quasi-axis of $f_k(x_j)$, for all $i,j$ and $k$.  Up to conjugation, we can assume that $\textup{Ax}(f_k(x_i))$ is a uniformly bounded distance $D''$ from the identity $e$ for all $k$ and $i$.  If $y$ is a point on $\textup{Ax}(f_k(x_i))$ closest to $e$, then we can bound the distance between the identity and $f_k(x_i)$ in the Cayley graph as follows.  Recall that there exists constants $\lambda$ and $c$ such that the translation length of $g$ along its quasi-axis $\textup{Ax}(g)$ is at most $\lambda ||g|| + c$, such that $\lambda$ and $c$ are independent of $g$.
 \begin{eqnarray*}
 d(e, f_k(x_i)) &\leq& D'' + d(y, f_k(x_i)y) + D'' \\
 &\leq & 2 D'' + \lambda ||f_k(x_i)|| + c \\
 & \leq & 2 D'' + \lambda N + c
 \end{eqnarray*}
 This implies that up to conjugation, there are only finitely many possibilities for $f_k(x_i)$.  Hence $\mathcal{A}$ must be finite.

\end{proof}
\end{proof}
To complete the proof of Theorem \ref{twistibund} it remains to show that $\mathcal{PS}(M)$ is strictly larger than the interior of $AH(M)$.  This will follow immediately from Theorem \ref{maximality}, which will be proven in section \ref{bpoints}.

\section{Primitive-stable points on the boundary of $AH(M)$} \label{bpoints}
In this section we prove Theorem \ref{maximality}.  We break up the proof into Propositions \ref{characterizePS} and \ref{notdd}.  We start by characterizing which points in $AH(M)$ lie in $\mathcal{PS}(M)$.  The interior of $AH(M)$ consists of convex cocompact representations (see \cite{sul}), namely those representations whose associated hyperbolic manifold has a compact convex core.  We will show that $[\rho]$ in $AH(M)$ lies in $\mathcal{PS}(M)$ if and only if $\rho(g)$ is hyperbolic for all primitive elements $g$ of $G$.  In particular, $\mathcal{PS}(M)$ will contain the interior of $AH(M)$ as well as all purely hyperbolic points on the boundary of $AH(M)$.  This will complete the proof of Theorem \ref{twistibund}.  Then, to complete the proof of Theorem \ref{maximality} we will use an observation by Minsky that if $[\sigma]$ in $AH(M')$ maps a core curve of an essential annulus to a parabolic element of $\textup{PSL}(2, \mathbb{C})$ then $[\sigma]$ cannot lie in a domain of discontinuity of the action of $Out(\pi_1(M'))$ on $\mathcal{X}(M')$.  Finally we conclude with the result that no point on the boundary of quasi-Fuchsian space can lie in a domain of discontinuity.

We start this section by reviewing some basic facts from hyperbolic geometry that we will need.  There exists a constant $\mu_3>0$ such that for any hyperbolic $3$-manifold $N \cong \H^3/ \Gamma$, $\Gamma$ a discrete subgroup of $Isom^+(\H^3)$ and any $\epsilon < \mu_3$ each component of $N_{thin(\epsilon)}=\{x \in N | inj_N(x) < \epsilon\}$ is either a metric neighborhood of a closed geodesic or a parabolic cusp homeomorphic to either $S^1 \times \mathbb{R} \times (0, \infty)$ or to $T \times (0, \infty)$ where $T$ is a torus (see \cite{bp}, Chp. D).  $N^0_\epsilon$ denotes the complement of the non-compact portions of $N_{thin(\epsilon)}$ in $N$.  The convex core $C(N)$ of $N$ is the smallest convex submanifold of $N$ such that the inclusion of $C(N)$ into $N$ is a homotopy equivalence.  $N$ is called convex cocompact if $C(N)$ is compact.  In general, when $\pi_1(N)$ is finitely generated, there exists a compact submanifold $C$, called the compact core, whose inclusion induces a homotopy equivalence with $N$ (see \cite{scott}).   Moreover, $C$ can be chosen such that $C$ intersects each component of the noncompact portions of $N_{thin(\epsilon)}$ in a single incompressible annulus or torus (see \cite{mcc}).  A compact core of the latter type is called a relative compact core.

The goal of this section is to prove the following.

\begin{thm2}
Let $[\rho]$ be an element of $AH(M)$.  Then $[\rho]$ does not lie in $\mathcal{PS}(M)$ if and only if there exists a primitive element $g$ of $\pi_1(M)$ such that $\rho(g)$ is parabolic.  Moreover, if $\rho$ lies in $AH(M) - \mathcal{PS}(M)$, then $\rho$ does not lie in any domain of discontinuity for the action of $Out(\pi_1(M))$ on $\mathcal{X}(M)$.
\end{thm2}

We will use the following characterization of which discrete and faithful representations mapping all primitive elements to hyperbolic elements of $\textup{PSL}(2, \mathbb{C})$ are primitive-stable.
\begin{lemma} \label{compact}
Let $\rho$ be a discrete and faithful representation of $\pi_1(M)$ into $\textup{PSL}(2, \mathbb{C})$ such that $\rho(g)$ is hyperbolic for any primitive element $g$.  Then $\rho$ is primitive-stable if and only if there exists a compact subset $\Omega$ of $N_\rho = \H^3/\rho(\pi_1(M))$ such that the set of geodesics corresponding to primitive elements of $\pi_1(M)$ is contained in $\Omega$.
\end{lemma}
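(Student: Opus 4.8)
The statement has two directions, and both hinge on relating quasi-geodesic behavior of the equivariant map $\tau_{\rho,x}$ to the geometry of the manifold $N_\rho = \mathbb{H}^3/\rho(\pi_1(M))$. The plan is to work in the quotient $N_\rho$ rather than upstairs in $\mathbb{H}^3$, using the projection $p\colon \mathbb{H}^3 \to N_\rho$, and to exploit that each primitive $g$ maps to a \emph{hyperbolic} (hence loxodromic) element, so that $\rho(g)$ has a genuine geodesic axis descending to a closed geodesic $\gamma_g^*$ in $N_\rho$.

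For the direction ``compact set $\Rightarrow$ primitive-stable,'' I would argue as follows. Suppose $\Omega \subset N_\rho$ is compact and contains every closed geodesic $\gamma_g^*$ corresponding to a primitive $g$. Pick the basepoint $x$ so that $p(x) \in \Omega$, and enlarge $\Omega$ to a compact set whose preimage behaves well. The key point is that for a primitive $g$, the geodesic $L(g)$ in the Cayley graph has endpoints $g_\pm$, and $\tau_{\rho,x}$ sends these to the endpoints of the axis of $\rho(g)$; the image $\tau_{\rho,x}(L(g))$ lies a bounded distance from that axis because $\tau_{\rho,x}$ is equivariant and coarsely $g$-periodic along $L(g)$. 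Since the projected axis stays in the compact set $\Omega$, the image path in $N_\rho$ stays in a uniformly bounded neighborhood of $\Omega$; I would then lift this to a uniform bound in $\mathbb{H}^3$ and invoke the Morse-type stability of quasi-geodesics together with the local geometry on the compact set $\Omega$ (where injectivity radius is bounded below and the map has controlled geometry) to conclude that all these paths are $(K,A)$-quasi-geodesics for uniform constants. Concretely, I expect to verify the separating-planes criterion of Lemma \ref{qg condition}: confinement to a compact region forces the bisecting planes $P_{j,i}$ to separate their neighbors with a uniform gap $c>0$.

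For the converse, ``primitive-stable $\Rightarrow$ compact set,'' I would argue contrapositively: if no such compact $\Omega$ exists, then the closed geodesics $\gamma_g^*$ escape to infinity in $N_\rho$, meaning they penetrate arbitrarily deeply into the noncompact ends or the cusps (here the relative compact core and the thick-thin decomposition from the geometric preliminaries enter). A geodesic going deep into a rank-one cusp or escaping an end of $N_\rho$ cannot have its lift remain a uniform quasi-geodesic: the portion of $\tau_{\rho,x}(L(g))$ that travels into and back out of a deep cusp excursion forces the quasi-geodesic constants to blow up, since the image path has to backtrack relative to the intrinsic Cayley-graph length. Quantitatively, I would show that deep excursions violate the separation-and-gap condition of Lemma \ref{qg condition} for any fixed $(K,A)$, contradicting a single uniform choice of constants.

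The main obstacle is the converse direction, specifically ruling out that primitive geodesics escape to infinity while the map somehow stays quasi-geodesic. The delicate case is cusp excursions: a loxodromic $\rho(g)$ whose axis dips arbitrarily deep into a \emph{parabolic} cusp before returning. I expect to need the precise geometry of horoballs and the fact that the density of points of a uniform quasi-geodesic inside a deep horoball region is controlled, so that deep excursions are incompatible with uniform quasi-geodesic constants. This is where I would lean most heavily on Minsky's criterion (Lemma \ref{qg condition}) and on the structure of $N_{thin(\epsilon)}$, and it is the step most likely to require the full strength of the compact-core and thick-thin machinery assembled at the start of this section.
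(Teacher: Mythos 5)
Both directions of your proposal have the right objects in play (axes of loxodromic elements, the quotient $N_\rho$, Minsky's criterion), but in each direction the step carrying the actual content is missing or wrong, and the common defect is the same: you never establish a comparison between word-metric distances and extrinsic $\H^3$-distances, which is precisely what primitive-stability asserts. In the forward direction, your claim that $\tau_{\rho,x}(L(g))$ lies a uniformly bounded distance from the axis of $\rho(g)$ ``by equivariance and coarse $g$-periodicity'' only yields a bound depending on $g$; uniformity over all primitive $g$ is exactly the issue. Worse, the concluding step is circular: the Morse lemma is a property of paths already known to be quasi-geodesics and cannot certify that a path is one, and confinement of the image to (the preimage of) a compact set does not force the separation-and-gap condition of Lemma \ref{qg condition} --- a confined path may backtrack arbitrarily, so confinement alone gives no lower bound on $d(\alpha(s),\alpha(t))$. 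The paper's mechanism, which your sketch lacks, is: replace $\Omega$ by a compact core containing the image of the Cayley graph, so that $\tilde\Omega$ is connected and, by the \v{S}varc--Milnor argument, $\tau_{\rho,x}\colon C_S(G)\to\tilde\Omega$ is a quasi-isometry for the \emph{intrinsic} path metric on $\tilde\Omega$; hence each $\tau_{\rho,x}(l)$, $l\in\mathcal{P}$, is a uniform intrinsic quasi-geodesic. The hypothesis that primitive geodesics lie in $\Omega$ is then used exactly once: it places $\textup{Ax}(g)$ inside $\tilde\Omega$, where it is a geodesic for both the intrinsic and extrinsic metrics, and closest-point projection onto it gives $d_{\tilde\Omega}(y,z)\le d_{\H^3}(y,z)+4R$ for $y,z$ on $\tau_{\rho,x}(l)$, converting intrinsic quasi-geodesicity into the extrinsic statement.

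In the backward direction your proposed mechanism is both unnecessary and incorrect: quasi-geodesics (indeed geodesics) in $\H^3$ dive arbitrarily deep into horoballs, so ``deep cusp excursions force the constants to blow up'' is not a valid principle, and primitive geodesics could also exit every compact set through a degenerate end without entering any cusp, which your horoball analysis would not detect. The correct argument is two lines, and it is the one the paper gives: the vertices of every $\tau_{\rho,x}(l)$ lie in the orbit $\rho(G)x$, so the image of the entire Cayley graph projects to a \emph{compact} subset of $N_\rho$ (one point together with finitely many edge-images); if $\rho$ is $(K,A)$-primitive-stable, the Morse lemma --- now legitimately applicable, since the paths are known to be quasi-geodesics --- places each axis $\textup{Ax}(g)$ within uniformly bounded Hausdorff distance of $\tau_{\rho,x}(l)$, so every primitive closed geodesic lies in a fixed bounded neighborhood of that compact set. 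No thick-thin decomposition, relative compact core, or horoball estimate is needed for this lemma; that machinery enters only later, in Proposition \ref{characterizePS}.
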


\begin{proof}
Suppose there exists a compact set $\Omega$ such that all primitive geodesics of $N_\rho$ are contained in $\Omega$.  Without loss of generality we can assume that $\Omega$ is a compact core $C$ of $N_\rho$ containing the image of $C_S(G)/ \rho(G)$ in $N_\rho$.  This implies, in particular, that $\tilde \Omega$, the preimage of $\Omega$ in $\H^3$ is connected.  For some $(K, A)$, $\tau_{\rho, x}:C_S(G) \rightarrow \tilde \Omega \subset \H^3$ is a $(K, A)$-quasi-isometry from $C_S(G)$ to $\tilde \Omega$ with the intrinsic metric.  Any geodesic $l$ in $\mathcal{P}$ connecting $g_-$ and $g_+$, the fixed points of $g$, maps to a $(K, A)$-quasi-geodesic in $\tilde \Omega$, with the intrinsic metric.  In particular, $\tau_{\rho,x}(l)$ lies in a $R=R_\Omega(K, A)$-neighborhood of $\textup{Ax}(g)$, a lift of the geodesic representing $g$ in $N_\rho$.  Then $\tau_{\rho,x}(l)$ lies in a $R$ neighborhood of $\textup{Ax}(g)$ with the extrinsic metric on $\tilde \Omega$.  If $x,y$ lie on $\tau_{\rho, x}(l)$ and if $\pi$ denotes the closest point projection onto $\textup{Ax}(g)$ in $\tilde \Omega$, then
$$
d_{\tilde \Omega}(x, y) \leq d_{\tilde \Omega}(\pi(x),\pi( y)) + 2R = d_{\H^3}(\pi(x), \pi(y)) + 2R \leq d_{\H^3} (x, y)+4R
$$
This implies that $\tau_{\rho, x}(l)$ is a $(K, A+4R)$-quasi-geodesic in $\tilde \Omega$ with the extrinsic metric.  Hence $\rho$ is $(K, A+4R)$-primitive-stable.
Conversely, if $\rho$ is $(K, A)$-primitive-stable then elements of $\mathcal{P}$ stay within a bounded neighborhood of their corresponding geodesic axes in $\H^3$.  In particular, geodesics representing primitive elements will stay in a bounded neighborhood of the image of the Cayley graph in $N_\rho$, which is a compact set.
\end{proof}

We will start by proving the first assertion of Theorem \ref{maximality}.

\begin{proposition} \label{characterizePS}
Let $[\rho]$ be an element of $AH(M)$.  Then $[\rho]$ does not lie in $\mathcal{PS}(M)$ if and only if there exists a primitive element $g$ of $\pi_1(M)$ such that $\rho(g)$ is parabolic.
\end{proposition}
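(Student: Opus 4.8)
The plan is to prove the two implications separately. Write $G=\pi_1(M)\cong\pi_1(B)$, let $p\colon\tilde B\to B$ be the orientation double cover with deck involution $\theta$, and recall that $\partial M=\tilde B$ carries the index-two subgroup $\pi_1(\tilde B)\le G$. For the \emph{easy} direction, suppose $\rho(g)$ is parabolic for some primitive $g$. I would show directly that $\rho$ cannot be $(K,A)$-primitive-stable for any $(K,A)$: the geodesic $l\in L(g)\subset\mathcal P$ would be carried by $\tau_{\rho,x}$ to a bi-infinite $(K,A)$-quasi-geodesic whose two endpoints on $\partial\H^3$ are, by equivariance, the forward and backward limits of $\rho(g)^{n}\tau_{\rho,x}(v)$. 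For a parabolic these limits coincide at the single fixed point, whereas a quasi-geodesic line has two \emph{distinct} endpoints (the lower bound forces $d(\alpha(s),\alpha(t))\to\infty$), a contradiction. Equivalently, one may feed $g$ into the translation-length estimate of Section~\ref{prelims}: primitive-stability forces $l_\rho(g)\ge \|g\|/K''-A''-2R''$, but $l_\rho(g)=0$ for a parabolic, and since $L(g)=L(g^{n})$ all lie in $\mathcal P$ this is contradicted for $\|g\|$ large. Either way $[\rho]\notin\mathcal{PS}(M)$.

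Conversely, assume $\rho(g)$ is loxodromic for every primitive $g$; I want $[\rho]\in\mathcal{PS}(M)$. By Lemma~\ref{compact} it suffices to produce a compact $\Omega\subset N_\rho$ containing every primitive geodesic. By tameness $N_\rho$ is homeomorphic to $\mathrm{int}(M)$, so it has a single end modelled on $\tilde B\times[0,\infty)$, which is either geometrically finite or simply degenerate. In the geometrically finite case every closed geodesic lies in the compact convex core (no primitive element is parabolic, so none enters a cusp), and we are done. The real content is therefore to show that in the degenerate case \emph{no sequence of primitive geodesics can exit the end}.

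To exploit the nonorientable structure I would pass to the cover $\hat N=\H^3/\rho(\pi_1(\tilde B))$, which is homeomorphic to $\tilde B\times\mathbb R$ and carries the isometric involution $\iota$ induced by the nontrivial coset; topologically $\iota$ is $(x,t)\mapsto(\theta x,1-t)$, so it \emph{swaps} the two ends $E_0,E_1$ of $\hat N$. Lifting the degenerate end of $N_\rho$ shows both $E_0,E_1$ are degenerate, so $\hat N$ is doubly degenerate, with ending laminations $\lambda_0,\lambda_1$ on $\tilde B$ satisfying $\lambda_1=\theta_*\lambda_0$. Now split primitive curves according to their lifts. A \emph{one-sided} primitive curve has $g\notin\pi_1(\tilde B)$, and its preimage in $\hat N$ is a single $\iota$-invariant closed geodesic; since $\iota$ swaps $E_0$ and $E_1$, such a geodesic cannot be pushed into either end, so it stays in a compact part and its image in $N_\rho$ cannot escape. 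A \emph{two-sided} primitive curve $c$ has $g\in\pi_1(\tilde B)$, and its preimage is a disjoint pair $\{\alpha,\theta\alpha\}$ of simple closed geodesics on $\tilde B$ interchanged by $\iota$.

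The crux — and what I expect to be the main obstacle — is ruling out escape of the two-sided primitive geodesics. Here I would use that the two ending laminations of a doubly degenerate surface-group manifold jointly fill $\tilde B$, hence $i(\lambda_0,\lambda_1)>0$. If a sequence of two-sided primitive geodesics exited $E$, their lifts would exit $E_0\sqcup E_1$; after passing to a subsequence the curves $\alpha_n$ exit $E_0$, so $[\alpha_n]\to[\lambda_0]$ in $\mathcal{PML}(\tilde B)$, while $\theta\alpha_n$ exit $E_1$, so $[\theta\alpha_n]\to[\lambda_1]$. Continuity of geometric intersection number together with $i(\lambda_0,\lambda_1)>0$ then forces $i(\alpha_n,\theta\alpha_n)>0$ for large $n$, contradicting the fact that $\alpha_n$ and $\theta\alpha_n$ are the two \emph{disjoint} lifts of the simple curve $c_n$ on $B$. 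This contradiction shows primitive geodesics cannot exit a degenerate end, and combined with the geometrically finite and one-sided cases it places all primitive geodesics in a compact set, so Lemma~\ref{compact} yields $[\rho]\in\mathcal{PS}(M)$. The delicate points to nail down are the convergence of exiting simple geodesics to the ending lamination (Bonahon--Thurston), the identifications $\lambda_1=\theta_*\lambda_0$ and the joint-filling/positive-intersection property, and the bookkeeping of how single versus double lifts to $\hat N$ track one-sided versus two-sided primitive curves.
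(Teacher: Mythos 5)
Your easy direction is fine, your reduction via Lemma \ref{compact} is the right one, and your key structural idea---pass to the double cover $\hat N=\H^3/\rho(\pi_1(\tilde B))$ and use that the involution $\iota$ swaps its two ends---is exactly the mechanism the paper exploits. The gap is in how you use it. The negation of the conclusion of Lemma \ref{compact} only gives a sequence of primitive geodesics $\gamma_n$ and \emph{points} $x_n\in\gamma_n$ leaving every compact set; it does not give that the geodesics themselves eventually leave every compact set, i.e.\ exit an end. A closed geodesic can meet a fixed compact set and still make arbitrarily deep excursions into a degenerate end (think of geodesic representatives of $T_{b_n}^{k_n}(a)$ for curves $b_n$ deep in the end: they track $b_n$ for a long time yet return near the fixed geodesic of $a$). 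The Thurston--Bonahon fact you invoke---simple closed curves whose geodesic representatives \emph{exit} an end converge in $\mathcal{PML}$ to the ending lamination---requires exiting, so your two-sided case does not cover such sequences. The problem is sharpest in your one-sided case: there, $\iota$-invariance forces the geodesics to meet the compact core $\tilde C$, so they \emph{never} exit an end, and your conclusion ``cannot be pushed into either end, so it stays in a compact part'' is a non sequitur---meeting $\tilde C$ gives no bound on how far the geodesic wanders, and these are precisely the geodesics your lamination tools cannot see.

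What is missing is a device converting ``one point of $\gamma_n$ is deep'' into ``an object of uniformly bounded size is deep,'' and this is what the paper supplies: each primitive geodesic is realized on a simplicial hyperbolic surface $h_n\colon B\to N$ with a one-vertex triangulation, which has area at most $-2\pi\chi(B)$ and, by Bonahon's bounded diameter lemma, uniformly bounded diameter modulo the thin part. Lifting equivariantly to the double cover, the two lifts of a deep point $x_n$ lie on the single surface $\tilde h_n(S)$ but on opposite sides of $\tilde C$ (since the deck transformation swaps the two components of the complement of $\tilde C$), so a path on the surface must run from depth $i$ to depth $i$ across $\tilde C$---contradicting the diameter bound. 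This handles one-sided and two-sided curves uniformly and needs no ending lamination theory. Two further points: (i) your geometrically finite case silently assumes there are no cusps; ruling out cusps from ``no primitive element is parabolic'' needs the fact that every parabolic of a twisted $I$-bundle group is carried by an essential annulus, hence is a primitive or the square of a primitive (essentially the argument of Proposition \ref{notdd}), whereas the paper's surface argument handles cusps uniformly via the short-essential-simple-curve estimate; (ii) if you want to keep the intersection-number strategy, you would need the stronger, nonstandard statement that deep \emph{excursions} (not exiting) already force subsequential $\mathcal{PML}$-limits to have zero intersection with the ending lamination---and proving that would again require bounded-diameter surface realizations, i.e.\ the paper's argument.
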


\begin{proof}
The backwards direction is easy for if $\rho(g)$ is parabolic, then for any geodesic $l$ connecting the fixed points $g_+$ and $g_-$ on $\partial C_S(G)$, $\tau_{\rho, x}(l)$ is not quasi-geodesic.

For the forward direction, if $\rho(g)$ is hyperbolic for every primitive $g$, then by Lemma \ref{compact} it suffices to check that closed geodesics corresponding to primitive elements remain in a compact set.  Let $\gamma_g$ denote the unique geodesic representative of $\rho(g)$ in $N$.  The representation $\rho$ induces a homotopy equivalence $h_\rho:M \rightarrow N$.  Precompose with the inclusion $B \rightarrow M$, to obtain an incompressible map $h_\rho': B \rightarrow N$.  Let $\alpha_g$ be a simple closed curve on $B$ such that $h_\rho'(\alpha_g)$ is freely homotopic to $\gamma_g$.  Fix a point $x_0$ on $\alpha_g$.  Extend $\alpha_g$ and $x_0$ to a one vertex triangulation of $B$, meaning a collection of mutually nonisotopic arcs $k_i$ with all endpoints at $x_0$ and disjoint interiors such that $B- \cup k_i \cup \alpha_g$ is a collection of triangles.

The map $h_\rho'$ is homotopic to a map $h_g$ such that $\alpha_g$ maps to $\gamma_g$, each arc $k_i$ is mapped to a geodesic arc and each triangle is mapped to a totally geodesic triangle.
If we endow $B$ with the pull-back metric, then $B$ has a hyperbolic metric with one cone singularity at $x_0$.  By construction, the sum of the angles of the sectors around $x_0$ is at least $2\pi$. In particular, the area of $B$ is bounded above by $-2\pi\mathcal{X}(B)$.  We have constructed a so-called simplicial hyperbolic surface (for more details see \cite{bon} \S 1.2).

We claim that given any such simplicial hyperbolic surface, $h: B \rightarrow N$, there is a uniform upper bound on how far its image can venture out of $N^0_\epsilon$.  Observe first that there exists a constant $A$ depending only on the Euler characteristic of $B$ such that for any point $x$ in $B$ there exists a homotopically nontrivial simple curve through $x$ of length less than $A$.  To produce such a curve take a ball centered at $x$ and blow it up until it intersects itself.  Since the area of $B$ is bounded, there is a uniform upper bound on the area and radius of such a ball.

For any $L> 2A$, if $h(B) \cap (\mathcal{N}_L(N^0_\epsilon)- N^0_\epsilon) \neq \emptyset$ then there exists a homotopically non-trivial simple curve entirely contained within a noncompact component of $N_{thin(\epsilon)}$.  This implies that the curve represents a parabolic element, a contradiction.  So, there exists $\epsilon_0$ such that simplicial hyperbolic surfaces realizing primitive geodesics are contained in $N^0_{\epsilon_0}$  

Now suppose to the contrary that $\{\gamma_i\}$ is a sequence of primitive geodesics not contained in any compact set.  Let $h_i: B \rightarrow N$ denote a simplicial hyperbolic surface containing $\gamma_i$.  We can lift $h_i$ to a map $\tilde h_i: S \rightarrow \tilde N$ where $S = \partial M$ and $\tilde N$ is the double cover of $N$ associated to the subgroup $\pi_1(S)$.  The map $\tilde h_i$ is a simplicial hyperbolic surface containing $\gamma_i$ and $\tilde \theta(\gamma_i)$ where the associated triangulation is the preimage of the triangulation on $B$.  Moreover, by construction, $\tilde h_i$ satisfies $\tilde \theta \circ \tilde h_i = \tilde h_i \circ \theta$, where $\tilde \theta$ is the nontrivial covering transformation of $\tilde N$.

Fix $C$ a compact core for $N$. The preimage $\tilde C$ of $C$ in $\tilde N$ is a compact core for $\tilde N$.  As $\tilde C$ is homotopy equivalent to a fiber surface $S$ which separates $\tilde N \cong S \times \mathbb{R},$ $\tilde N - \tilde C$ has two components.  Since $\tilde C$ covers $C$ and $C$ has only one boundary component, $\tilde \theta$ must exchange the two boundary components of $\tilde C$, and hence $\tilde \theta$ must exchange the two components of $\tilde N - \tilde C$.

As $\{\gamma_i\}$ is not contained in any compact set, we can assume, up to subsequence, that there exists a point $x_i$ on $\gamma_i$ such that $x_i$ lies outside the compact set $C_i$ where $C_i$ is defined as
$$C_i = \{x \in N^0_{\epsilon_0} \text{  }| \text{  there exists a path $c$ from $x$ to $C$ such that } l( c  \cap N_{thick(\epsilon_0)} ) \leq i\}.$$

As $\tilde \theta$ interchanges the two components of $\tilde N - \tilde C$, the two lifts $\tilde x_i, \tilde x_i'$ of $x_i$ lie in different components of $\tilde N - \tilde C$, but by the equivariance property of $\tilde h_i$, they both lie on $\tilde h_i(S)$.  Any path $c$ on $\tilde h_i(S)$ connecting $\tilde x_i$ and $\tilde x_i'$ satisfies $d(c \cap \tilde N_{thick(\epsilon_0)}) \geq 2i$, for if not, in $N$ there would be a path $c'$ connecting $x_i$ to $C$ with $d(c' \cap N_{thick(\epsilon_0)}) < i$.  For $i$ large enough, this contradicts Bonahon's bounded diameter lemma (\cite{bon} Lemma 1.11) that states that the diameter of any incompressible simplicial hyperbolic surface modulo the $\epsilon_0$-thin part is bounded above where the bound depends only on $\epsilon_0$ and the topology of the surface.

\end{proof}

This completes the proof of the first assertion of Theorem \ref{maximality}.  To see the second assertion we will need the following observation due to Minsky.

\begin{lemma} [Minsky] \label{infstab}
Let $M$ be a compact hyperbolizable manifold with no toroidal boundary components.  Let $\gamma$ be the core curve of an essential annulus in $M$.  Suppose that $\rho: \pi_1(M) \rightarrow \textup{PSL}_2(\mathbb{C})$ is a discrete and faithful representation such that $\rho(\gamma)$ is parabolic.  Then any neighborhood of $[\rho_0]$ contains points with infinite stabilizers.  In particular, $[\rho]$ cannot lie in a domain of discontinuity for the action of $Out(\pi_1(M))$ on $\mathcal{X}(M)$.
\end{lemma}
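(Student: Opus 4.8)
The plan is to exploit the infinite-order Dehn twist $T_A \in Out(\pi_1(M))$ determined by the essential annulus $A$, and to produce, arbitrarily close to $[\rho]$, representations on which a power of $T_A$ acts trivially. The mechanism is to deform the parabolic $\rho(\gamma)$ into an \emph{elliptic} element of finite order: if $\sigma(\gamma)^k = \mathrm{id}$ in $\textup{PSL}_2(\mathbb{C})$, then $T_A^k$ fixes $[\sigma]$ outright. First I would record the action of $T_A$ on $\pi_1(M)$. Writing the splitting induced by $A$ (an amalgam $\pi_1(M_1)*_{\langle\gamma\rangle}\pi_1(M_2)$ if $A$ separates, or an HNN extension with stable letter $t$ otherwise), the twist acts as the identity on one factor and as conjugation by $\gamma$ on the other (respectively $t\mapsto t\gamma$). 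Hence for every $n$ and every $\sigma$, the representation $\sigma\circ T_A^{-n}$ differs from $\sigma$ only by conjugating the second factor by $\sigma(\gamma)^n$; in particular, if $\sigma(\gamma)^k=\mathrm{id}$ then $\sigma\circ T_A^{-k}=\sigma$ on the nose. Since $A$ is essential, $T_A$ has infinite order in $Out(\pi_1(M))$, so $\langle T_A^k\rangle\cong\mathbb{Z}$ lies in the stabilizer of such a $[\sigma]$, which is therefore infinite.

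The heart of the argument is the construction of a sequence $\sigma_k\to\rho$ with $\sigma_k(\gamma)$ elliptic of order $k$, that is, $\mathrm{tr}\,\sigma_k(\gamma)=2\cos(\pi/k)$. I would argue that the trace function $\mathrm{tr}_\gamma:\mathcal{X}(M)\to\mathbb{C}$ is holomorphic and non-constant near the irreducible character $[\rho]$: since $\gamma$ is non-central and $[\rho]$ is a smooth point of the expected positive dimension, $\mathrm{tr}_\gamma$ cannot be locally constant equal to $2$. A nontrivial element of $\textup{PSL}_2(\mathbb{C})$ has order $k$ exactly when its trace is $2\cos(j\pi/k)$ for a suitable $j$, a real value in $(-2,2)$; thus I need to reach real traces slightly below $2$. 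Using a real one-parameter (bending, or cone-type) deformation of $\rho$ supported along $A$ — equivalently, moving $[\rho]$ into the locus where $\gamma$ acquires a real rotation angle — one obtains a real-analytic arc $\rho_s$ with $\rho_0=\rho$ along which $\mathrm{tr}\,\rho_s(\gamma)$ sweeps out an interval $(2-\epsilon,2)$. Selecting parameters $s_k$ with $\mathrm{tr}\,\rho_{s_k}(\gamma)=2\cos(\pi/k)$ gives the desired $\sigma_k:=\rho_{s_k}$, and $s_k\to 0$ forces $[\sigma_k]\to[\rho]$. Note that the $\sigma_k$ need not be discrete or faithful; it suffices that they be characters in $\mathcal{X}(M)$ near $[\rho]$.

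The main obstacle is precisely this last construction: producing deformations in which $\gamma$ becomes a genuine \emph{elliptic} (real trace in $(-2,2)$) rather than merely a loxodromic with complex trace, and guaranteeing that the resulting finite-order characters accumulate at $[\rho]$ itself. This is exactly where the hypothesis that $\rho(\gamma)$ is parabolic enters: the trace value $2$ places $[\rho]$ on the common frontier of the loxodromic and elliptic loci of $\mathrm{tr}_\gamma$, so the openness of $\mathrm{tr}_\gamma$ near $[\rho]$ makes the discrete values $2\cos(\pi/k)\to 2^-$ attainable along an arc limiting on $[\rho]$. I would isolate the existence of this real elliptic (cone-angle) direction as the one technical point requiring care.

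Finally I would assemble the conclusion. Each $[\sigma_k]$ has infinite stabilizer containing $\langle T_A^k\rangle$, and these points accumulate at $[\rho]$. If $[\rho]$ lay in some $Out(\pi_1(M))$-invariant open set $U$ on which the action is properly discontinuous, then $U$ would contain some $[\sigma_k]$; taking the compact set $K=\{[\sigma_k]\}$, every element of the infinite group $\langle T_A^k\rangle$ satisfies $[f]\cdot K\cap K\neq\emptyset$, contradicting proper discontinuity. Hence $[\rho]$ lies in no domain of discontinuity for the action of $Out(\pi_1(M))$ on $\mathcal{X}(M)$, which is the assertion of the lemma.
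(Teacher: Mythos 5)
Your overall strategy coincides with the paper's: produce characters $[\sigma_k]\to[\rho]$ with $\sigma_k(\gamma)$ elliptic of finite order $k$, observe that the $k$-th power of the Dehn twist along $A$ then fixes $[\sigma_k]$ exactly (and generates an infinite cyclic subgroup of $Out(\pi_1(M))$ since $A$ is essential), and conclude that points with infinite stabilizers accumulate at $[\rho]$, which is incompatible with lying in any domain of discontinuity. Your description of the twist action on the amalgam/HNN splitting and the final proper-discontinuity contradiction are correct and match the paper's mechanism (one minor caveat: trace is only defined up to sign on $\textup{PSL}_2(\mathbb{C})$, so one should work with $\textup{tr}^2_\gamma$ as the paper does).

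The genuine gap sits exactly at the step you flag as ``the one technical point requiring care,'' and the justifications you offer for it do not work. Non-constancy of the trace function near $[\rho]$ does not follow from ``$\gamma$ non-central and $[\rho]$ a smooth point of positive dimension'': there can be positive-dimensional families through a character along which the trace of an annulus core is constant --- indeed, bending deformations along $A$ itself are of exactly this kind, since they conjugate one side of the splitting by elements commuting with $\rho(\gamma)$ (or replace the stable letter $t$ by $tc$ with $c$ commuting with $\rho(\gamma)$) and therefore leave $\textup{tr}\,\rho(\gamma)$ unchanged. For the same reason your parenthetical suggestion that a ``bending'' arc could sweep the trace through $(2-\epsilon,2)$ fails, and a ``cone-type'' deformation is precisely the object whose existence needs proof, not an available tool. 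The paper closes this gap with two substantive inputs: (a) a neighborhood of $AH(M)$ in $\mathcal{X}(M)$ is a smooth complex manifold on which $\textup{tr}^2_\gamma$ is holomorphic, hence either constant or open there; and (b) by the Density Theorem (\cite{bcm2}, \cite{ns}) the convex cocompact representations forming the interior of $AH(M)$ are dense in $AH(M)$, so arbitrarily close to $[\rho]$ there are characters at which $\gamma$ is loxodromic and $\textup{tr}^2_\gamma\neq 4$; thus the map is non-constant, hence open, and its image contains the real values $4\cos^2(\pi/k)\to 4^-$, which yields the finite-order elliptic characters you need. Without invoking density (or some substitute producing even one nearby character where $\gamma$ is not parabolic or the identity), your argument cannot get started at this point.
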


\begin{proof}
 Consider the map $$\textup{tr}^2_\gamma: \mathcal{X}(M) \rightarrow \mathbb{C}$$ where $[\rho] \mapsto \textup{tr}([\rho(\gamma)])^2$.  As a neighborhood of $AH(M)$ is a smooth complex manifold (see \cite{kap} Chapter 4), on which $\textup{tr}^2_\gamma$ is a holomorphic map, $\textup{tr}^2$ is either constant or open on that neighborhood.  As the interior of $AH(M)$ consists of convex cocompact representations that are dense in $AH(M)$ (see \cite{bcm2} and \cite{ns}), the image of $\textup{tr}^2_\gamma$ cannot be constant on all of $AH(M)$ and hence it must be an open map on that neighborhood of $AH(M)$.  Since isometries of $\H^3$ are determined, up to conjugacy, by their trace and there are finite order elliptic isometries with trace arbitrarily close to $2$ or $-2$, there exist representations $\rho_i$ approaching $\rho$ such that $\rho_i(\gamma)$ corresponds to a finite order elliptic isometry.  Let $n_i$ denote the order of $\rho_i(\gamma)$.  Then $D^{n_i}_\gamma$ the Dehn twist of order $n_i$ about the annulus whose core curve is $\gamma$ is an element in $Out(\pi_1(M))$ that fixes $[\rho_i]$.  Hence, elements arbitrarily close to $[\rho]$ have infinite stabilizers.
\end{proof}

\begin{proposition} \label{notdd}

If $\rho$ lies in $AH(M) - \mathcal{PS}(M)$, then $\rho$ does not lie in any domain of discontinuity for the action of $Out(\pi_1(M))$ on $\mathcal{X}(M)$.
\end{proposition}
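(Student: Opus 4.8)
The plan is to combine the characterization in Proposition \ref{characterizePS} with Minsky's Lemma \ref{infstab}. Starting from a representation $[\rho] \in AH(M) - \mathcal{PS}(M)$, Proposition \ref{characterizePS} immediately produces a primitive element $g \in \pi_1(M)$ with $\rho(g)$ parabolic. The only missing ingredient is to promote this primitive element to the core curve of an \emph{essential annulus} in $M$, so that Lemma \ref{infstab} applies; this is exactly where the $I$-bundle structure of $M$ is used.

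By definition $g$ is represented by a simple closed curve $\alpha$ on the base surface $B$, and under the bundle projection $\pi \colon M \to B$ (a homotopy equivalence inducing the identification $\pi_1(M) \cong \pi_1(B)$) the vertical subbundle $\pi^{-1}(\alpha)$ has a core circle mapping homeomorphically onto $\alpha$, hence representing $g$. Since $\rho$ is faithful and $\rho(g)$ is parabolic we have $g \neq 1$, so $\alpha$ is essential in $B$. I would then split into two cases according to whether $\alpha$ is one- or two-sided. If $\alpha$ is two-sided, then $\pi^{-1}(\alpha)$ is an annulus $A$ whose core represents $g$, and $A$ is essential because $\alpha$ is essential in $B$ and $B \hookrightarrow M$ is a homotopy equivalence. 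If $\alpha$ is one-sided, then $\pi^{-1}(\alpha)$ is instead a M\"obius band, so I would pass to the boundary $\partial \nu_B(\alpha)$ of a regular neighborhood of $\alpha$ in $B$: this is a two-sided essential curve representing $g^2$, whose vertical subbundle is an essential annulus $A$ with core representing $g^2$.

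In either case $A$ is an essential annulus in $M$ whose core curve $\gamma$ has $\rho(\gamma)$ parabolic: in the two-sided case $\gamma = g$ and $\rho(g)$ is parabolic by hypothesis, while in the one-sided case $\gamma = g^2$ and $\rho(\gamma) = \rho(g)^2$ is parabolic, since a nontrivial power of a parabolic isometry is again parabolic. Because the boundary of $M$ is the orientable double cover $\tilde B$ of the hyperbolic surface $B$, the manifold $M$ has no toroidal boundary components, so the hypotheses of Lemma \ref{infstab} are satisfied. Applying that lemma to $\gamma$, I conclude that every neighborhood of $[\rho]$ contains points with infinite stabilizer, and hence $[\rho]$ cannot lie in any domain of discontinuity for the action of $Out(\pi_1(M))$ on $\mathcal{X}(M)$.

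The main obstacle is the one-sided case: a one-sided simple closed curve does not itself bound an essential annulus in $M$, and the argument rests on the elementary but essential observation that replacing $g$ by $g^2$ simultaneously yields a genuine (two-sided) essential annulus and preserves parabolicity. Checking that the vertical annuli produced above are genuinely essential, namely incompressible and not boundary-parallel, is routine given that $M$ is an $I$-bundle over the hyperbolic surface $B$, but it is the step that should be stated with care, and it is what lets Minsky's observation be invoked.
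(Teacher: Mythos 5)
Your proof is correct, and it diverges from the paper's in exactly one place: the treatment of the one-sided case. Both arguments begin identically (Proposition \ref{characterizePS} produces a primitive $g$ with $\rho(g)$ parabolic, and the goal is to feed Lemma \ref{infstab} an essential annulus with parabolic core curve), and both face the same obstacle, namely that a primitive element may be represented by a one-sided curve on $B$, whose vertical preimage is a M\"obius band rather than an annulus. The paper resolves this by showing the one-sided case \emph{cannot occur}: using a relative compact core of $N_\rho$, a covering-space argument, and Hempel's Theorems 10.6 and 13.6, it proves that any curve of $M$ sent to a parabolic must be homotopic into $\partial M$, whereas the core of an essential M\"obius band is not; hence the parabolic primitive is automatically the core of an essential annulus. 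You instead sidestep the question entirely by replacing $g$ with $g^2$, represented by the boundary of a regular neighborhood of the one-sided curve, whose vertical preimage is a genuine annulus, and observing that $\rho(g)^2$ is still parabolic. Your route is more elementary --- it trades the paper's 3-manifold topology for the observation that nontrivial powers of parabolics are parabolic --- though it proves less along the way: the paper's argument additionally shows that representations in $AH(M)$ never send one-sided primitives to parabolics, a fact your proof does not recover. The step you flag as routine (that the vertical annuli are essential, not merely incompressible) does hold and is worth recording: a vertical annulus over an essential two-sided curve in $B$ cannot be boundary-parallel, because each complementary component is an $I$-bundle over a subsurface of the closed surface $B$; in your one-sided case the annulus over $\partial \nu_B(\alpha)$ winds twice around the solid torus $\pi^{-1}(\nu_B(\alpha))$, so it is not parallel into $\partial M$ through that side, and the other side is not a product region since its base has negative Euler characteristic.
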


\begin{proof}
If $[\rho]$ lies in the complement of $\mathcal{PS}(M)$ in $AH(M)$, then there exists a primitive element $g$ such that $\rho(g)$ is parabolic.  Then $g$ is either the core curve of an essential annulus or the core curve of an essential Mobius band.  By Lemma \ref{infstab} it suffices to show that the latter is impossible.   Suppose that $\gamma$ is a closed essential curve in $M$ mapping to a parabolic element in $N_\rho=\H^3/\rho(G)$.  Then we claim that $\gamma$ must be homotopic into $\partial M$.  If $C$ is a relative compact core for $N_\rho$, consider the map $\phi: S=\partial M \rightarrow C$ in the homotopy class of $\rho|_{\pi_1(\partial M)}$.  If $\tilde C$ is the cover of $C$ associated to the subgroup $\phi_*(\pi_1(S))$ it is a compact manifold with $\pi_1(\tilde C) \cong \pi_1(S)$.  This implies that $\tilde C$ must be a trivial $I$-bundle over $S$ (\cite{hem} Theorem 10.6).  Then the lift $S \rightarrow \tilde C$ can be homotoped into $\partial \tilde C$.  Hence the map $S \rightarrow C$ is also homotopic into the boundary of $C$.  Then, we can homotope the map $M \rightarrow C$ to a map that sends $\partial M $ into $\partial C$.  This map is either homotopic to a homeomorphism or is homotopic to a map $M \rightarrow \partial C$ (\cite{hem} Theorem 13.6).  The latter cannot happen as this would imply that $\partial C$ is a nonorientable closed surface. Hence any curve in $M$ mapping to a parabolic element in $N_\rho$ is homotopic into $\partial M$.  As the core curve of an essential Mobius band cannot be homotoped into $\partial M$, it cannot be mapped to a parabolic element in $N_\rho$.
\end{proof}

This completes the proof of Theorem \ref{maximality}.  We end this section with an application of Minsky's observation in the quasi-Fuchsian case.
\begin{proposition} \label{qfs}
Let $F$ be an orientable hyperbolic surface.  Then no point on the boundary of $AH( F \times I)$ can lie in a domain of discontinuity for the action of $Out(\pi_1(F \times I))$ on $\mathcal{X}(F \times I)$.
\end{proposition}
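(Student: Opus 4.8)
The plan is to derive the result from Minsky's Lemma~\ref{infstab} together with the density of cusps in the quasi-Fuchsian boundary. Write $M = F \times I$ and $G = \pi_1(M) \cong \pi_1(F)$, so that the interior of $AH(M)$ is quasi-Fuchsian space and $Out(G)$ contains the (extended) mapping class group of $F$. The first observation I would record is that in a trivial $I$-bundle over an orientable surface, every essential, non-peripheral simple closed curve $\gamma$ on $F$ is the core curve of the essential annulus $\gamma \times I \subset M$. Hence, by Lemma~\ref{infstab}, if $[\sigma] \in AH(M)$ is any discrete and faithful representation for which $\sigma(\gamma)$ is parabolic for some such $\gamma$ (an \emph{accidental parabolic}), then $[\sigma]$ cannot lie in any domain of discontinuity for the action of $Out(G)$ on $\mathcal{X}(M)$.

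The key analytic input I would then invoke is that boundary groups carrying an accidental parabolic are dense in $\partial AH(M)$. Concretely, the maximal cusps, i.e.\ geometrically finite boundary groups realizing a full pants decomposition of $F$ by simultaneously parabolic simple closed curves, are dense in $\partial AH(F \times I)$ by the density of maximal cusps (McMullen's ``cusps are dense,'' extended by Canary--Culler--Hersonsky--Shalen). Since each parabolic in a maximal cusp is the core of an essential annulus, letting $\mathcal{C}$ denote the set of boundary points with at least one accidental parabolic, we obtain that $\mathcal{C}$ is dense in $\partial AH(M)$ and that, by the previous paragraph, no point of $\mathcal{C}$ lies in any domain of discontinuity.

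The proof would then conclude by a short openness-versus-density argument. Suppose, for contradiction, that some $[\rho] \in \partial AH(M)$ lies in a domain of discontinuity $\Omega$. Since $\Omega$ is open in $\mathcal{X}(M)$, it contains an open neighborhood $U$ of $[\rho]$; by density of $\mathcal{C}$ in $\partial AH(M)$ there is a cusped representation $[\sigma] \in U \cap \mathcal{C} \subseteq \Omega$. But $[\sigma] \in \mathcal{C}$ cannot lie in \emph{any} domain of discontinuity, contradicting $[\sigma] \in \Omega$. Thus no point on $\partial AH(F\times I)$ lies in a domain of discontinuity, as claimed.

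The hard part is the density statement for $\mathcal{C}$: everything else is formal given Lemma~\ref{infstab} and the openness of domains of discontinuity. If $F$ has punctures one must additionally check that the parabolics produced are the accidental (non-peripheral) ones rather than boundary parabolics, but the curves in a maximal cusp are non-peripheral by construction, so the same argument applies and delivers the core curve of an essential annulus in each case.
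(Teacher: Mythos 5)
Your proposal is correct and takes essentially the same route as the paper's (very compressed) proof: Lemma~\ref{infstab}, plus a density theorem for cusped representations on $\partial AH(F\times I)$, plus the openness of any would-be domain of discontinuity. The only substantive difference is which density theorem you invoke: the paper cites density of geometrically finite points on the boundary (\cite{ch} together with \cite{bcm1}), while you cite density of maximal cusps (McMullen, Canary--Culler--Hersonsky--Shalen); both suffice, and your version has the mild advantage that the pinched curves in a maximal cusp are visibly non-peripheral simple closed curves on $F$, hence cores of essential annuli, a fact the paper's version uses implicitly when passing from ``geometrically finite boundary point'' to ``parabolic core curve of an essential annulus.''

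One warning about your final paragraph: in this paper ``hyperbolic surface'' should be read as \emph{closed}, and your claim that the same argument covers punctured $F$ is not correct. If $F$ has punctures, then $F\times I$ is a handlebody and $\pi_1(F\times I)$ is free, and the statement itself fails in that setting: by Minsky's theorem (\cite{min}) the primitive-stable representations form a domain of discontinuity for $Out(F_n)$ that contains points of $\partial AH$. The place your argument breaks is the density step: maximal cusps of a handlebody pinch curves on $\partial(F\times I)$ that need not be homotopic into $F$ and need not be cores of essential annuli, so Lemma~\ref{infstab} does not apply to them, and cusps along $F$-curves are \emph{not} dense in the handlebody boundary. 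So your proof should be stated, as the paper intends, for closed $F$ only.
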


\begin{proof}
Since geometrically finite points are dense on the boundary of $AH(F \times I)$ (see \cite{bcm1} and \cite{ch}) and since any simple closed curve on $F$ is the core curve of an essential annulus, the result follows from Lemma \ref{infstab}.
\end{proof}

\end{document}